\documentclass{article}

\PassOptionsToPackage{numbers,sort&compress}{natbib}
\PassOptionsToPackage{dvipsnames}{xcolor}

\usepackage[preprint]{neurips_2026}

\usepackage[utf8]{inputenc} %
\usepackage[T1]{fontenc}    %
\usepackage{hyperref}       %
\usepackage{url}            %
\usepackage{booktabs}       %
\usepackage{amsfonts}       %
\usepackage{nicefrac}       %
\usepackage{microtype}      %
\usepackage{xcolor}

\usepackage{amsmath,amssymb,amsthm,mathtools}
\usepackage{enumitem}
\usepackage{graphicx}
\usepackage{authblk}
\usepackage{aliascnt}
\usepackage[capitalize,noabbrev]{cleveref}

\newtheorem{theorem}{Theorem}
\newcommand{\sharedthm}[2]{%
  \newaliascnt{#1}{theorem}\newtheorem{#1}[#1]{#2}\aliascntresetthe{#1}}
\sharedthm{proposition}{Proposition}
\sharedthm{corollary}{Corollary}
\sharedthm{lemma}{Lemma}
\theoremstyle{definition}
\sharedthm{remark}{Remark}
\sharedthm{example}{Example}

\newcommand{\R}{\mathbb{R}}
\newcommand{\Sph}{\mathbb{S}}

\newcommand{\cB}{\mathcal{B}}

\hypersetup{colorlinks=true,linkcolor=MidnightBlue,citecolor=OliveGreen,urlcolor=MidnightBlue}

\title{Non-Euclidean Gradient Descent \\  Has Many Edges of Stability}

\author{\makebox[10.5em]{Peter Zaika\rlap{\textsuperscript{$*$}}}}
\author{\makebox[10.5em]{Zakhar Shumaylov\rlap{\textsuperscript{$*$}}}}
\affil{University of Cambridge\protect\\[2pt] {\small \mbox{\llap{$^*$}Equal contribution}}}
\date{}

\begin{document}
\maketitle

\begin{abstract}
Recent observations that deep learning models train at the edge of stability have renewed interest in understanding stability properties of classical optimization algorithms. For quadratic objectives, standard gradient descent has a sharp stability threshold: it converges from every initialization when $\eta<2/L$, diverges from almost every initialization when $\eta>2/L$, and diverges from every nonzero initialization when $\eta>2/\mu$. For non-Euclidean gradient descent, the corresponding known guarantees are weaker: convergence holds for every initialization when $\eta<2/S$, where $S$ is the generalized sharpness, while every nonzero initialization diverges when $\eta>2/\mu$. The behavior in the intermediate regime $2/S<\eta<2/\mu$ is not fully characterized. We show that Hilbertian norms are the only ones for which $2/S$ is an almost-everywhere sharp threshold for every positive-definite quadratic objective. Using linear conjugacy between non-Euclidean and Euclidean gradient descent, we show that for general norms the intermediate regime can contain arbitrarily many edges of stability associated with distinct attracting directions. We identify a forward-invariant region of exponentially divergent directions beyond the generalized stability threshold. When the directions of minimum curvature are finite and nondegenerate, we prove that, sufficiently close to $2/\mu$ from below, only their invariant directions remain convergent. We give smooth two-dimensional constructions with arbitrarily many distinct thresholds, arbitrarily large relative threshold separation and show these properties to be stable.
\end{abstract}

\begin{figure}[!ht]
\centering
\includegraphics[width=\linewidth]{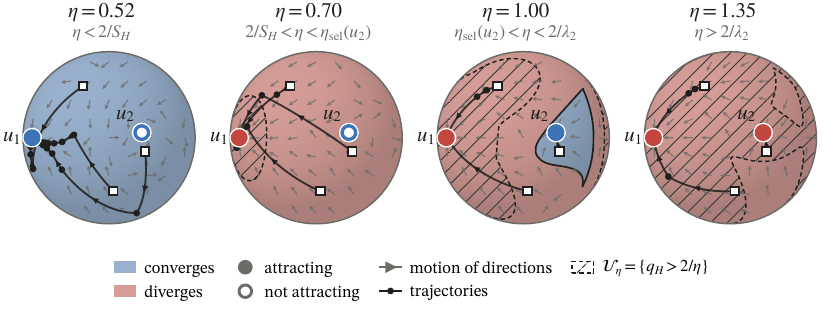}
\caption{Convergent (blue) and divergent (red) initial directions for the example of \cref{sec:3d} at four step sizes, on one hemisphere of the conjugate $z$-coordinates, which contains every direction since $u$ and $-u$ coincide. Markers show the local maxima $u_1$ (global) and $u_2$ of $q_H$: filled if they attract nearby directions, and coloured by the fate of their own line. Grey arrows show the two-step direction of motion; black curves are three trajectories of directions, starting at the open squares. Hatching marks the invariant divergence region $\mathcal U_\eta$ of \cref{thm:escape}. Computational details are in \cref{app:3d}.}
\label{fig:shells3d}
\end{figure}

\section{Introduction}\label{sec:intro}

The edge-of-stability (EoS) phenomenon was first documented for vanilla gradient descent in deep learning by \citet{Cohen2021} and related dynamical-stability work of \citet{Wu2018}: during training, the top Hessian eigenvalue grows toward and then remains near the stability scale $2/\eta$. Subsequent work has connected this regime to bifurcation structure \cite{SongYun2023}, cubic self-stabilization \cite{Damian2023}, and low-dimensional oscillatory dynamics beyond the stability plateau \cite{Ghosh2025}. For preconditioned and adaptive methods, analogous edge-of-stability descriptions involve the spectrum of the preconditioned Hessian \cite{Cohen2022}. \citet{islamov2026noneuclideangradientdescentoperates} ask how this picture extends beyond inner-product geometries.

\noindent The classical edge of stability is easiest to introduce on a quadratic objective:
\[
L(x)=\frac12x^\top Hx,\qquad H\succ0.
\]
For Euclidean gradient descent, the iteration diagonalizes in the eigenspaces of $H$. If
\[
S_H=\lambda_{\max}(H),
\qquad
\mu_H=\lambda_{\min}(H),
\]
then every initialization converges for $\eta<2/S_H$, almost every initialization diverges for $\eta>2/S_H$, and every nonzero initialization diverges for $\eta>2/\mu_H$. The first threshold is therefore an almost-everywhere sharp edge; the second removes the remaining lower-eigenvalue stable subspaces. This local quadratic mechanism is the standard explanation underlying the Euclidean EoS picture \cite{Cohen2021,Damian2023}.

\noindent \citet{islamov2026noneuclideangradientdescentoperates} ask how this picture extends to
non-Euclidean gradient descent.  For a general norm $\|\cdot\|$, they study the update
\begin{equation}
 x_{t+1}=x_t-\eta\nabla\Phi^*(Hx_t),
 \qquad
 \Phi(x)=\frac12\|x\|^2,
 \quad \Phi^*(x)
:= \frac12\|x\|_*^2.
 \label{eq:ngd}
\end{equation}
and introduce the generalized sharpness, motivated through \emph{directional smoothness}, following \citet{Mishkin2024}, as the curvature actually experienced along a single optimization step. 
\begin{equation}
 S_H:=\max_{\|d\|=1} d^\top H d,
 \qquad \mu_H:=\min_{\|d\|=1}d^\top H d.
 \label{eq:Sintro}
\end{equation}
In what follows, $\Phi$ is assumed to be $C^2$ on $\R^d\setminus\{0\}$ and $\nabla^2\Phi(x)\succ0$ for every $x\neq0$. These assumptions make the duality map unique and smooth away from the origin.
Their exact loss-change identity shows that, whenever the gradient is nonzero, the loss decreases exactly when directional smoothness lies below $2/\eta$; generalized sharpness is then the corresponding worst-direction Hessian quantity \cite[Secs.~2--3]{islamov2026noneuclideangradientdescentoperates}.
For positive-definite quadratics, \citet[Sec.~5]{islamov2026noneuclideangradientdescentoperates} prove two complementary results. Their Theorem~5.1 gives
\begin{equation}
\eta<\frac2{S_H}
\quad\Longrightarrow\quad
\text{linear convergence from every initialization}.
\label{eq:ICCGstable}
\end{equation}
Their Theorem~5.2 shows that the threshold cannot be improved as a \emph{uniform} guarantee: for every $\eta>2/S_H$, there exists an initialization, together with a valid dual-gradient selection when the latter is nonunique, from which the method diverges. The key is their Lemma~5.3. If $\hat d$ maximizes \eqref{eq:Sintro}, then $\hat d$ is preserved by the update and, along its span,
\begin{equation}
x_t=(1-\eta S_H)^t x_0.
\label{eq:ICCGline}
\end{equation}
Hence the sharpest direction becomes unstable exactly at $2/S_H$. This is much weaker than the Euclidean conclusion: it only gives divergence on a particular invariant line, whereas Euclidean GD diverges from all but a zero-measure set of initializations \cite[Sec.~5]{islamov2026noneuclideangradientdescentoperates}. Despite this, quadratic experiments suggest that divergence is much more generic than the theorem proves, and they explicitly leave this gap open. 
\\\;\\
\noindent This paper resolves the quadratic gap in a different direction than one might first expect.  In general, the Euclidean almost-everywhere statement is \emph{false}: beyond $2/S_H$ there may simultaneously be open sets of divergent and convergent initializations. 
\begin{figure}[t!]
\centering
\includegraphics[width=\linewidth]{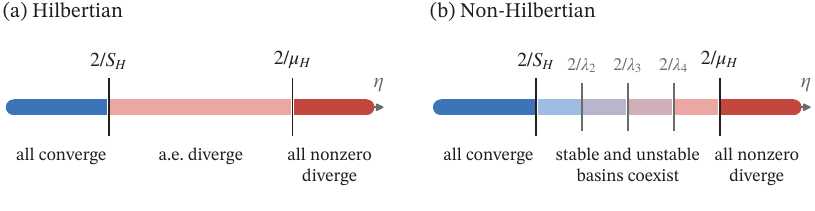}
\caption{Stability regimes of non-Euclidean gradient descent on a positive-definite quadratic, as a function of the step size $\eta$ (positions schematic). For every norm, all initializations converge for $\eta < 2/S_H$ and every nonzero initialization diverges for $\eta > 2/\mu_H$. \textbf{(a)}~For a Hilbertian norm, almost every initialization diverges throughout the intermediate window $2/S_H < \eta < 2/\mu_H$. \textbf{(b)}~For non-Hilbertian norms and suitable $H$, different projectively attracting critical directions have distinct radial stability thresholds $2/\lambda_i$, allowing open convergent and divergent basins to coexist.
}
\label{fig:phase}
\end{figure}

\paragraph{First result: Sharp generalized EoS characterizes Hilbertian norms.}
A norm is Hilbertian if $\|x\|^2=x^\top Px$ for some $P\succ0$. Let
$
\cB_\eta
:=\{x_0\in\R^d:x_t\to0
\text{ under \eqref{eq:ngd}}\}
$
denote the convergence basin. We show that the almost-everywhere condition completely characterizes the allowed norms. 
\[
\boxed{
\|\cdot\|\text{ is Hilbertian}
\quad\Longleftrightarrow\quad
\forall H\succ0,\ \forall\eta>2/S_H,\ 
\mu_{\R^d}(\mathcal B_\eta)=0.
}
\]
Equivalently, inner-product norms are exactly the norms for which generalized sharpness retains the Euclidean almost-everywhere sharpness property for \emph{every} positive-definite quadratic. Every non-Hilbertian norm admits a quadratic and a step size $\eta>2/S_H$ for which an open set of initializations still converges (\cref{thm:hilbert}).

This answers the qualitative question left by \citet{islamov2026noneuclideangradientdescentoperates}: generic divergence immediately beyond $2/S_H$ is not a property of arbitrary norm geometry. The rest of the paper explains why.

\paragraph{Second result: Why the single edge splits.}
Set $A=H^{1/2}$ and $z_t=Ax_t$. It is well known that quadratic non-Euclidean GD is linearly conjugate to ordinary Euclidean GD on the convex $2$-homogeneous function \cite{boyd2004convex}. Specifically, \eqref{eq:ngd} is equivalent to:
\begin{equation}
z_{t+1}=z_t-\eta\nabla Q(z_t),
\qquad
Q(z)=\frac12\|Az\|_*^2.
\label{eq:qgd}
\end{equation}
\noindent Since $Q$ is $2$-homogeneous, its dynamics separate into a radial component and a directional component determined by $Q$ on the Euclidean unit sphere. If $u\in\mathbb S^{d-1}$ is a critical point of $Q|_{\mathbb S^{d-1}}$, then $\nabla Q(u)=\lambda u$
for some $\lambda>0$, and the corresponding ray is invariant:
$
z_{t+1}=(1-\eta\lambda)z_t.
$
Thus this direction has radial edge of stability $\eta=\frac2\lambda.$ Defining on the euclidean sphere
\begin{equation}
q_H(u):=2Q(u)=\|Au\|_*^2.
\label{eq:q}
\end{equation}
\noindent In the Euclidean case, the critical directions are exactly the eigendirections of $H$. For a general norm however, $Q|_{\mathbb S^{d-1}}$ may have several strict local maxima with different critical values, producing several distinct stability thresholds. By operator-norm duality, any strict \emph{suboptimal} maximum has:
\begin{equation}
\frac2{S_H}<\frac2\lambda\le\frac2{\mu_H} \qquad\text{as}\qquad
\min_{u\in\Sph^{d-1}}q_H(u)=\mu_H,
\quad
\max_{u\in\Sph^{d-1}}q_H(u)=S_H.
\label{eq:qrange}
\end{equation}
Thus, stable and unstable open basins may coexist at the same step size, meaning that:
\[
\boxed{\frac2{S_H}\text{ is the first edge of stability, not necessarily the only edge.}}
\]
\Cref{fig:phase} summarizes the distinction. For every nondegenerate strict local maximum $u$ of $q_H$, \cref{thm:modal} determines the step sizes at which nearby directions are attracted to $u$, and the rate at which iterates along $u$ grow or decay. A nondegenerate global maximum upgrades the divergent line of \citet[Thm.~5.2]{islamov2026noneuclideangradientdescentoperates} to an open divergent cone, while a nondegenerate suboptimal local maximum gives an open convergent cone for step sizes beyond $2/S_H$. 

Beyond these local results, the radial/projective decomposition also constrains the global stability basins. The high-Rayleigh region $\{q_H>2/\eta\}$ is forward-invariant under gradient descent iterations, and every
trajectory entering it diverges exponentially (\cref{thm:escape}). These regions expand as $\eta$ increases, giving nested upper bounds on the convergence basins, although the basins themselves need not be nested. If the global minima of $q_H$ are isolated and nondegenerate, almost every trajectory already diverges for step sizes sufficiently close to $2/\mu_H$ from below (\cref{thm:terminal}).
In \cref{sec:examples} we then construct an exact smooth two-edge example, an example with arbitrarily many distinct edges of stability in dimension two, show that the splitting into several edges is robust under perturbations, and families in which a stable basin persists arbitrarily far beyond $2/S_H$ in relative terms.

\section{Sharp generalized EoS characterizes Hilbert geometry}\label{sec:hilbert}

We begin with the structural theorem that answers the almost-everywhere question raised by \citet[Sec.~5]{islamov2026noneuclideangradientdescentoperates}. The almost-everywhere sharpness phenomenon over all quadratic objectives characterizes inner-product geometry. Throughout, we consider \(d\geq2\) as the $d = 1$ case is completely trivial.
\begin{theorem}[Hilbertian characterization]
\label{thm:hilbert}
Assume $\Phi(x)=\frac12\|x\|^2$ is $C^2$ on $\R^d\setminus\{0\}$ and $\nabla^2\Phi(x)\succ0$ for all $x\neq0$. The following are equivalent.
\begin{enumerate}[label=(\roman*),leftmargin=*]
\item $\|\cdot\|$ is induced by an inner product.
\item For every $H\succ0$, the generalized Rayleigh quotient
$
R_H(x):={x^\top Hx}/{\|x\|^2}
$
has no strict suboptimal local maxima on $\{\|x\|=1\}$.
\item For every $H\succ0$ and every $\eta>2/S_H$, the convergence basin $\cB_\eta$ has Lebesgue measure zero.
\end{enumerate}
\end{theorem}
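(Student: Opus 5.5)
The plan is to prove the cycle (i)$\Rightarrow$(iii)$\Rightarrow$(ii)$\Rightarrow$(i). Two of the three arrows are elementary. The real content is a geometric construction for non-Hilbertian norms, together with a local dynamical stability argument near a suboptimal maximum.

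\emph{(i)$\Rightarrow$(iii).} If $\|x\|^2=x^\top Px$ with $P\succ0$, then $\nabla\Phi^*(y)=P^{-1}y$, so \eqref{eq:ngd} is the linear map $x\mapsto (I-\eta P^{-1}H)x$. The matrix $P^{-1}H$ is similar to the symmetric matrix $P^{-1/2}HP^{-1/2}$, so it has real positive eigenvalues and a full eigenbasis. With $d=P^{1/2}x$, the quantity in \eqref{eq:Sintro} becomes an ordinary Rayleigh quotient, so its largest eigenvalue equals $S_H$. The basin $\cB_\eta$ is then the span of the eigenvectors whose eigenvalues satisfy $|1-\eta\lambda|<1$. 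For $\eta>2/S_H$ the top eigenvector is excluded, so $\cB_\eta$ is a proper subspace and hence Lebesgue-null.

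\emph{(iii)$\Rightarrow$(ii), by contraposition.} Suppose $R_H$ has a strict suboptimal local maximum at $\hat x$, with value $\lambda<S_H$. I would pass to the conjugate coordinates \eqref{eq:qgd}, where $\hat x$ corresponds to a critical direction $u$ of $q_H$ on the sphere with $\nabla Q(u)=\lambda u$ (by \eqref{eq:qrange} and operator-norm duality). Fix $\eta\in(2/S_H,2/\lambda)$ with $\eta\lambda>1$. Writing $z=ru$, the update is
\[
z-\eta\nabla Q(z)=r\bigl[(1-\eta q_H(u))u-\eta\nabla_T Q(u)\bigr],
\]
where $\nabla_T Q$ denotes the tangential gradient. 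After normalizing by the negative factor $1-\eta q_H(u)$, the projective map is $u\mapsto \mathrm{normalize}\bigl(u+c(u)\nabla_T Q(u)\bigr)$, with $c(u)=\eta/(\eta q_H(u)-1)>0$. This is a projected gradient \emph{ascent} on $q_H$. Near $u$ the radial factor $|1-\eta q_H|$ stays below $1$, because $q_H\le\lambda<2/\eta$ there. It therefore suffices to show that $u$ attracts nearby directions; those directions then give an open cone of initializations converging at a geometric rate, so $\cB_\eta$ has positive measure.

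The delicate point is that the ascent step $c\approx\eta/(\eta\lambda-1)$ is not small, so the fixed point $u$ could be overshot. To handle this I would arrange for $u$ to be nondegenerate with tangential Hessian eigenvalues $-\kappa_i$ satisfying $c\kappa_i<2$. The local linearization is then a contraction, which is the content of \cref{thm:modal} applied to this critical point. If the given maximum is degenerate or too sharply curved, I would not attack it directly. Instead I would obtain a good one from the construction in the next step, since that construction controls the curvature at the suboptimal maximum.

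\emph{(ii)$\Rightarrow$(i), the main obstacle.} Assume $\|\cdot\|$ is not Hilbertian. A local maximum of $R_H$ at a unit vector $x$ with value $\lambda$ means that the ellipsoid $\{y^\top Hy=\lambda\}$ is tangent to the unit sphere at $x$ and locally contains the unit ball near $x$. The goal is therefore an ellipsoid that touches the sphere at some point $x_1$ with strictly larger second fundamental form, so that the maximum is strict and nondegenerate, but is \emph{not} the circumscribing one: somewhere the sphere pokes outside it, so that $S_H>\lambda$.

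I would first try the following. Restrict to a two-dimensional section, where non-Hilbertian is witnessed by a planar section of the sphere that is not an ellipse. Pick a point $x_1$ on that curve and take the ellipse through $x_1$ that matches the tangent there, with curvature slightly larger than the curve's curvature at $x_1$ (possible by $C^2$ regularity and $\nabla^2\Phi\succ0$). Then choose the remaining degrees of freedom of the ellipse, such as its center symmetry and the other axis, so that it crosses the curve elsewhere. Such a choice should exist because the curve is not itself an ellipse, and finding it is the crux. After that, I would extend the ellipse to $\R^d$ by adding a large multiple of the projection onto the orthogonal complement of the plane. I would also tune the curvature gap so that $\kappa$ is small, which secures the condition $c\kappa<2$ used above. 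Finally, (i)$\Rightarrow$(ii) is immediate, since for inner-product norms the local maxima of a generalized Rayleigh quotient are global.
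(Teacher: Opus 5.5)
\textbf{Gap: the non-Hilbertian construction.} The gap is in the step you flag as the crux: producing, for a non-Hilbertian norm, some $H$ with a nondegenerate strict suboptimal maximum. Both your (ii)$\Rightarrow$(i) and your (iii)$\Rightarrow$(ii) rely on it, and the mechanism you propose does not exist.

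The ellipse $\{y^\top Hy=\lambda\}$ is necessarily centred at the origin, and in the plane such ellipses form a three-parameter family. Passing through $x_1$, matching the tangent there, and prescribing the curvature at $x_1$ are three conditions, so the ellipse is then unique. There is no ``other axis'' left to tune so that it crosses the curve elsewhere. Concretely, the admissible ellipses at $x_1$ (with $\|x_1\|=1$) form a one-parameter family, nested by inclusion, passing through the osculating one $\{y^\top\nabla^2\Phi(x_1)y\le1\}$. Those that curve less than the sphere at $x_1$ contain it. If the osculating ellipse contains the unit ball, every usable ellipse at $x_1$ does too, and $x_1$ can only be a global maximum.

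So the only real freedom is the choice of $x_1$. What you need is that some unit $x_1$ satisfies $S_{\nabla^2\Phi(x_1)}>1$. ``The curve is not an ellipse'' does not hand you such a point, and you give no argument for its existence. This is exactly \cref{lem:parallelogram}:
\begin{itemize}
\item if $d^\top\nabla^2\Phi(v)d\le\|d\|^2$ for all $v\neq0$ and all $d$, integrating $f(t)=\Phi(x+ty)$ twice gives $\|x+y\|^2+\|x-y\|^2\le2\|x\|^2+2\|y\|^2$;
\item substituting $(x\pm y)/2$ gives the reverse inequality;
\item Jordan--von Neumann then yields an inner product.
\end{itemize}
With such a $v$ in hand, $H_\varepsilon=(1-\varepsilon)G_v+\varepsilon gg^\top$, where $g=\nabla\Phi(v)$, gives a nondegenerate strict suboptimal maximum directly in $\R^d$.

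\textbf{Further errors in the construction.} First, the curvature direction is reversed. For $R_H$ to have a local maximum at $x_1$, the ellipse must contain the ball near $x_1$, so its curvature there must be slightly \emph{smaller} than the sphere's. A larger curvature gives a local minimum.

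Second, the extension to $\R^d$ fails. Adding a large multiple of the projection onto $\Pi^\perp$ makes $R_H$ increase as one leaves the plane, so $x_1$ is no longer a local maximum. Moreover, such a block-diagonal $H$ has $Hx_1\in\Pi$, whereas criticality needs $Hx_1=\lambda\nabla\Phi(x_1)$, and $\nabla\Phi(x_1)$ need not lie in $\Pi$. The paper's $H_\varepsilon$ avoids both problems by using the full Hessian $G_v$ and the full gradient $g$.

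\textbf{Unnecessary curvature tuning.} The tuning you request to secure $c\kappa_i<2$ is not needed. Your $\kappa_i$ equals $\lambda-\beta_i$ with $\beta_i>0$, so $c\kappa_i=\eta(\lambda-\beta_i)/(\eta\lambda-1)\to2-2\beta_i/\lambda<2$ as $\eta\uparrow2/\lambda$. Equivalently, $\eta_{\rm sel}(u)<2/\lambda$, so nondegeneracy alone suffices; this is how \cref{cor:suboptimal} is used in the paper.

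Your (i)$\Rightarrow$(iii) and (i)$\Rightarrow$(ii) arguments are fine.
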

\noindent The proof is in \cref{app:hilbert}. The quotient \(R_H\) is convenient for constructing objectives in primal coordinates, whereas \(q_H\) describes the dynamics after conjugacy. \Cref{lem:critical-duality} shows that their critical directions and values correspond.

\section{Radial decomposition}\label{sec:normal}
To understand the non-Eucldiean case, we first relate the critical points of $q_H$ to those of the generalized Rayleigh quotient $R_H(x)=x^\top Hx/\|x\|^2$.
Writing $z_t=r_tu_t$, with $r_t=\|z_t\|_2$ and $u_t\in\Sph^{d-1}$, gives an exact radial/projective decomposition. Proofs for this section are in \cref{app:structural}.

\begin{proposition}[Exact radial/projective dynamics]
\label{prop:radial}
For $u\in\Sph^{d-1}$,
\begin{equation}
 \nabla Q(u)=q_H(u)u+\frac12\nabla_Sq_H(u),
 \label{eq:polargrad}
\end{equation}
where $\nabla_S$ is the spherical gradient. Consequently
\begin{align}
 \frac{r_{t+1}}{r_t}&=\rho_\eta(u_t),\label{eq:rho-main}\\
 u_{t+1}&=
 \frac{(1-\eta q_H(u_t))u_t-\frac\eta2\nabla_Sq_H(u_t)}{\rho_\eta(u_t)},
 \label{eq:projmap}
\end{align}
where $\rho_\eta$ is defined below. The direction update is defined where $\rho_\eta(u)>0$; when $\rho_\eta(u)=0$, the iterate reaches the minimizer in one step.
\begin{equation}
 \rho_\eta(u)^2
 =(1-\eta q_H(u))^2+\frac{\eta^2}{4}\|\nabla_Sq_H(u)\|_2^2.
 \label{eq:rho}
\end{equation}
\end{proposition}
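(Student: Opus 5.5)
The plan is to derive \eqref{eq:polargrad} from Euler's identity for the $2$-homogeneous function $Q$. Everything else then follows by substituting it into \eqref{eq:qgd} and splitting the result into its component along $u_t$ and its component tangent to the sphere.

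First I would record that $Q(z)=\frac12\|Az\|_*^2$ is $C^1$ away from the origin, since by the standing assumptions $\Phi^*$ is $C^2$ there, and that $Q$ is positively $2$-homogeneous. Euler's identity then gives $\langle\nabla Q(z),z\rangle=2Q(z)$. At $u\in\Sph^{d-1}$ this reads $\langle\nabla Q(u),u\rangle=2Q(u)=q_H(u)$, so the radial component of $\nabla Q(u)$ is $q_H(u)u$.

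For the tangential part, the spherical gradient of $Q|_{\Sph^{d-1}}$ is the projection $(I-uu^\top)\nabla Q(u)$. Since $q_H=2Q$ on the sphere, this projection equals $\frac12\nabla_Sq_H(u)$. Adding the two parts gives \eqref{eq:polargrad}; the key point is that homogeneity fixes the normal component without any further computation.

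Next, $\nabla Q$ is $1$-homogeneous, so for $z_t=r_tu_t$ with $r_t>0$ we have $\nabla Q(z_t)=r_t\nabla Q(u_t)$. Hence
\[
z_{t+1}=r_t\Bigl[(1-\eta q_H(u_t))u_t-\tfrac\eta2\nabla_Sq_H(u_t)\Bigr].
\]
The two summands in the bracket are orthogonal, because $\nabla_Sq_H(u_t)\perp u_t$. The Pythagorean theorem therefore gives $\|z_{t+1}\|_2=r_t\rho_\eta(u_t)$ with $\rho_\eta$ as in \eqref{eq:rho}, which is \eqref{eq:rho-main}. When $\rho_\eta(u_t)>0$, dividing $z_{t+1}$ by its norm gives \eqref{eq:projmap}. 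When $\rho_\eta(u_t)=0$, both summands vanish, so $z_{t+1}=0$, which is the minimizer; by homogeneity every later iterate stays at $0$ (taking $\nabla Q(0)=0$). The case $r_t=0$ is trivial.

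No step here is a serious obstacle. The only point needing care is regularity: Euler's identity and the $1$-homogeneity of $\nabla Q$ require differentiability of $Q$ on $\R^d\setminus\{0\}$. This follows from the smoothness of $\|\cdot\|_*^2$ away from $0$, which is the dual of the assumption that $\Phi$ is $C^2$ with a positive-definite Hessian, together with the invertibility of $A=H^{1/2}$.
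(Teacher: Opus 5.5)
Your proof is correct and matches the paper's argument. The paper obtains \eqref{eq:polargrad} by differentiating $Q(ru)=\frac{r^2}{2}q_H(u)$ in Euclidean polar coordinates, which is what your Euler-identity radial component plus projected tangential component spells out; it then substitutes into \eqref{eq:qgd} and uses orthogonality of the two terms exactly as you do, while your explicit use of the $1$-homogeneity of $\nabla Q$ and your treatment of the $\rho_\eta=0$ case fill in details the paper leaves implicit.
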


\noindent The critical points of $q_H$ are precisely the preserved projective directions. At such a point $u$ with $q_H(u)=\lambda$,
\[
\nabla Q(u)=\lambda u,
\qquad
z_{t+1}=(1-\eta\lambda)z_t
\]
on the corresponding ray. 
The same decomposition also clarifies the directional smoothness quantity used by \citet[Sec.~2]{islamov2026noneuclideangradientdescentoperates}:
\begin{proposition}[Directional smoothness decomposition]
\label{prop:directional}
Define
\[
D_H(u):=\frac{\|\nabla Q(u)\|_2^2}{q_H(u)}.
\]
This equals the tight quadratic directional smoothness along the non-Euclidean update direction, and
\begin{equation}
D_H(u)=q_H(u)+\frac{\|\nabla_Sq_H(u)\|_2^2}{4q_H(u)}.
\label{eq:Ddecomp}
\end{equation}
In particular $q_H(u)\le D_H(u)\le S_H$, and
\begin{equation}
\rho_\eta(u)^2=1-\eta q_H(u)\bigl(2-\eta D_H(u)\bigr).
\label{eq:rhoD}
\end{equation}
\end{proposition}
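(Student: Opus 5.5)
We prove \cref{prop:directional} by direct computation from \cref{prop:radial}, together with the loss-change identity for a quadratic along a line. The proof has three parts: interpret $D_H$ as a directional smoothness, derive the decomposition \eqref{eq:Ddecomp}, and rewrite $\rho_\eta^2$.

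\textbf{Interpretation.} In conjugate coordinates the step from $z=u$ is $-\eta\nabla Q(u)$, so the direction of motion is $g:=\nabla Q(u)$. The loss satisfies $L(x)=\tfrac12 z^\top z$, hence
\[
L(z-\eta g)=L(z)-\eta\, z^\top g+\tfrac{\eta^2}{2}\|g\|_2^2 .
\]
By Euler's identity for the $2$-homogeneous $Q$ we have $u^\top\nabla Q(u)=2Q(u)=q_H(u)$. Matching this exact expansion against the smoothness template $L(z)-\eta\langle\nabla L,\cdot\rangle+\tfrac{M\eta^2}{2}(\cdots)$, with the normalisation used by \citet{Mishkin2024} and \citet{islamov2026noneuclideangradientdescentoperates}, identifies the tight constant $M$ as $\|g\|^2/q_H(u)$. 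Concretely, the first-order term $\eta q_H$ plays the role of the squared dual gradient norm, since $\|A u\|_*^2=q_H(u)$. I would state this identification explicitly in the variables of \eqref{eq:ngd}, checking that $x^\top H\,\nabla\Phi^*(Hx)=\|Hx\|_*^2$ and that $\nabla\Phi^*(Hx)^\top H\,\nabla\Phi^*(Hx)$ transforms into $\|\nabla Q(u)\|_2^2$. This bookkeeping between the primal normalisation and the conjugate one is the only step needing care.

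\textbf{Decomposition.} By \eqref{eq:polargrad}, $\nabla Q(u)=q_H(u)u+\tfrac12\nabla_S q_H(u)$, and $\nabla_S q_H(u)\perp u$. Pythagoras therefore gives
\[
\|\nabla Q(u)\|_2^2=q_H(u)^2+\tfrac14\|\nabla_S q_H(u)\|_2^2 .
\]
Dividing by $q_H(u)>0$ yields \eqref{eq:Ddecomp}. The lower bound $q_H\le D_H$ is immediate from this formula.

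\textbf{Upper bound and $\rho_\eta^2$.} For the upper bound $D_H\le S_H$, write $\nabla Q(u)=A\nabla\Phi^*(Au)$ and set $w=\nabla\Phi^*(Au)$. Duality gives $\|w\|=\|Au\|_*$, so
\[
\|\nabla Q(u)\|_2^2=w^\top H w\le S_H\|w\|^2=S_H\,q_H(u)
\]
by the definition \eqref{eq:Sintro} of $S_H$. This proves $D_H\le S_H$. Finally, expanding \eqref{eq:rho} and substituting $\tfrac{\eta^2}{4}\|\nabla_S q_H\|^2=\eta^2 q_H(D_H-q_H)$ from \eqref{eq:Ddecomp} gives
\[
\rho_\eta^2=1-2\eta q_H+\eta^2 q_H D_H ,
\]
which is \eqref{eq:rhoD}.
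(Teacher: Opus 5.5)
Your proof is correct and follows the paper's argument. Your upper-bound step, with $w=\nabla\Phi^*(Au)$, $\|w\|=\|Au\|_*$ and $\|\nabla Q(u)\|_2^2=w^\top Hw$, is exactly how the paper identifies $D_H(u)=w^\top Hw/\|w\|^2$ as the curvature along the update direction and obtains $D_H\le S_H$; the Pythagorean split via \eqref{eq:polargrad} and the substitution into \eqref{eq:rho} are also identical. Your loss-expansion paragraph is a harmless additional justification of the ``tight directional smoothness'' interpretation, and it is complete once combined with the identity $\|w\|^2=q_H(u)$ that you use in the next step.
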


\noindent \Cref{eq:Ddecomp} splits directional smoothness into $q_H(u)$, its value when the update preserves the direction of the iterate, and a nonnegative correction that vanishes exactly when it does not rotate it, i.e.\ at critical points of $q_H$.

\section{The edges of stability theorem}\label{sec:modal}
We now linearize the projective dynamics around critical directions to determine when their radial stability extends to nearby initializations. Let $u$ be a critical point of $q_H$ with $\lambda=q_H(u)$, and let $\beta_1,\ldots,\beta_{d-1}$ be the eigenvalues of $\nabla^2Q(u)$ on $u^\perp$. Since $\nabla^2Q(u)=A\nabla^2\Phi^*(Au)A\succ 0,$ all $\beta_i > 0$.
\begin{theorem}[Edges of stability at critical directions]
\label{thm:modal}
Let $u$ be a critical point of $q_H$ with critical value $\lambda$.
\begin{enumerate}[label=(\roman*)]
\item For $\eta \neq 1 / \lambda$, the projective map is well-defined at $[u]$, and its transverse multipliers are
\begin{equation}
 \kappa_i(\eta;u)=\frac{1-\eta\beta_i}{1-\eta\lambda}.
 \label{eq:kappa}
\end{equation}
\item $u$ is a nondegenerate strict local maximum of $q_H$ if and only if $\beta_i<\lambda$ for every $i$.
\item Suppose $u$ is a nondegenerate strict local maximum and let $\beta_{\min}:=\min_i\beta_i$. Then $[u]$ is locally exponentially attracting for
\begin{equation}
 \eta>\eta_{\rm sel}(u):=\frac{2}{\lambda+\beta_{\min}}.
 \label{eq:selection}
\end{equation}
For $0<\eta<\eta_{\rm sel}(u)$, whenever the projective map is defined at $[u]$, the critical direction is linearly unstable. 
\item Let $\eta \neq 1 / \lambda$. For every nonzero initialization whose projective trajectory remains defined and converges to $[u]$,
\begin{equation}
 \lim_{t\to\infty}\frac1t\log L(x_t)
 =2\log|1-\eta\lambda|.
 \label{eq:modegrowth}
\end{equation}
Thus \(2/\lambda\) is the radial stability threshold of the invariant critical line. When \(u\) is a nondegenerate strict local maximum, this threshold lies strictly above its projective selection threshold, so an open attracting cone is radially contracting immediately below \(2/\lambda\) and radially expanding immediately above it.
$
 \eta_{\rm edge}(u)=\frac2\lambda.
 \label{eq:modeedge}
$
\end{enumerate}
\end{theorem}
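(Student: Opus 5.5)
The plan is to linearize the map $F(z)=z-\eta\nabla Q(z)$ at $z=u$ and push it down to the projective space. Since $Q$ is $2$-homogeneous, $\nabla Q$ is $1$-homogeneous. Euler's relation then gives $\nabla^2Q(u)u=\nabla Q(u)=\lambda u$, so $u$ is an eigenvector of the symmetric matrix $\nabla^2Q(u)$ with eigenvalue $\lambda$. Consequently $u^\perp$ is invariant, and on it $\nabla^2Q(u)$ has eigenvalues $\beta_i$ with eigenvectors $e_i$.

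For (i), note that $F(u)=(1-\eta\lambda)u\neq0$ when $\eta\neq1/\lambda$, so $\rho_\eta(u)=|1-\eta\lambda|>0$ and the projective map $P(u)=F(u)/\|F(u)\|_2$ of \cref{prop:radial} is defined and smooth near $u$. Differentiating $F/\|F\|$ at $u$ in a tangent direction $v\in u^\perp$ gives $DF(u)v=(I-\eta\nabla^2Q(u))v$. After normalization by $|1-\eta\lambda|$ and projection onto $u^\perp$, the tangent map is $(I-\eta\nabla^2Q(u))|_{u^\perp}/(1-\eta\lambda)$, up to the sign identification $[u]=[-u]$. Its eigenvalues are $\kappa_i=(1-\eta\beta_i)/(1-\eta\lambda)$. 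Multipliers on projective space are defined up to that global sign, so only $|\kappa_i|$ matters for stability.

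For (ii), I would compute the spherical Hessian of $q_H=2Q$ at a critical point. The standard formula is $\mathrm{Hess}_S q_H(u)=\nabla^2q_H(u)|_{u^\perp}-\langle\nabla q_H(u),u\rangle I$. With $\nabla q_H(u)=2\lambda u$ and $\nabla^2q_H=2\nabla^2Q$, this becomes $2(\nabla^2Q(u)|_{u^\perp}-\lambda I)$, whose eigenvalues are $2(\beta_i-\lambda)$. Hence $u$ is a nondegenerate strict local maximum exactly when every $\beta_i<\lambda$. For the "only if" direction, "nondegenerate" means an invertible Hessian, and a local maximum forces that Hessian to be negative semidefinite; together these give $\beta_i<\lambda$.

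For (iii), I need $|\kappa_i|<1$ for all $i$, given $0<\beta_i<\lambda$. The condition $|1-\eta\beta|<|1-\eta\lambda|$ is equivalent to $(1-\eta\beta)^2<(1-\eta\lambda)^2$. Expanding, this reads $\eta(\lambda-\beta)(\eta(\lambda+\beta)-2)>0$, that is $\eta>2/(\lambda+\beta)$. The binding constraint comes from the smallest $\beta$, giving $\eta>2/(\lambda+\beta_{\min})$. Below this threshold, $|\kappa|>1$ for the $\beta_{\min}$ mode, which yields linear instability. Local exponential attraction then follows from the standard theorem that a smooth map with spectral radius less than $1$ at a fixed point attracts exponentially.

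For (iv), \cref{prop:radial} gives $\log r_t=\log r_0+\sum_{s<t}\log\rho_\eta(u_s)$. Since $\rho_\eta$ is continuous and positive near $u$ with $\rho_\eta(u)=|1-\eta\lambda|$, the Cesàro average converges to $\log|1-\eta\lambda|$. Because $L(x_t)=\tfrac12\|z_t\|_2^2$, we get $\tfrac1t\log L\to2\log|1-\eta\lambda|$. The final claim follows by comparing thresholds: $\beta_{\min}<\lambda$ gives $2/(\lambda+\beta_{\min})<2/\lambda$. Just below $2/\lambda$ the point $u$ is attracting and the radial factor is less than $1$; just above, it remains attracting and the radial factor exceeds $1$. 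The open cone comes from the basin of the attracting fixed point.

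The main obstacle is the sign and quotient bookkeeping in (i): making sure that projecting onto $u^\perp$ correctly removes the radial component, and that the $\pm$ ambiguity when $1-\eta\lambda<0$ is handled on $\mathbb{RP}^{d-1}$. I would do this by writing $P$ in a local chart $v\mapsto(u+v)/\|u+v\|_2$.
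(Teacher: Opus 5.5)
Your proposal is correct and follows essentially the same route as the paper: you linearize $z\mapsto z-\eta\nabla Q(z)$ at the invariant ray using Euler's identity $\nabla^2Q(u)u=\lambda u$, identify the spherical Hessian as $2(\nabla^2Q(u)-\lambda I)|_{u^\perp}$, factor $(1-\eta\lambda)^2-(1-\eta\beta_i)^2=\eta(\lambda-\beta_i)(\eta(\lambda+\beta_i)-2)$, and obtain the growth rate as a Ces\`aro average of $\log\rho_\eta(u_t)$. Your explicit chart computation of the sign of $\kappa_i$ when $1-\eta\lambda<0$ is more careful than the paper's one-line ``relative to the radial multiplier'' step, and you could also note that $\eta_{\rm sel}(u)>1/\lambda$ (since $\beta_{\min}<\lambda$), so the projective map is automatically defined throughout the attracting range.
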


The theorem gives a mechanism for multiple edges: each nondegenerate strict local maximum of the nonlinear Rayleigh quotient has an attracting projective basin. The proof is in \cref{app:modal}.

\paragraph{Radial versus projective stability.}
The two stability conditions play distinct roles.
At a critical direction $u$ with value $\lambda$,
the radial multiplier $1-\eta\lambda$ determines
whether iterates along the invariant line contract
or expand. The transverse multipliers
$
\kappa_i(\eta;u)
=\frac{1-\eta\beta_i}{1-\eta\lambda}
$
determine whether nearby directions approach
that line in projective space. If $|\kappa_i(\eta;u)|<1$ for every $i$,
then $[u]$ has an open projective basin of
attraction. On the corresponding cone of
initializations, \cref{thm:modal} gives the
asymptotic radial growth rate
$\log|1-\eta\lambda|$.
Consequently, such a cone is exponentially
convergent when $|1-\eta\lambda|<1$, and
exponentially divergent when
$|1-\eta\lambda|>1$.

\noindent Thus, projective attraction allows stability
or instability of an invariant line to extend
to an open set of initializations. In particular,
two projectively attracting critical directions
with different critical values may support
convergent and divergent cones at the same
step size.

\section{Coexisting stable and unstable basins}\label{sec:consequences}
Proofs for this section are in \cref{app:modal}.
The first consequence strengthens the existential divergence statement of \citet[Thm.~5.2]{islamov2026noneuclideangradientdescentoperates} above the generalized threshold.

\begin{corollary}[The first edge has an open divergent basin]
\label{cor:first}
Let \(u_\star\) be a nondegenerate global maximizer of \(q_H\). For every \(\eta>2/S_H\), the projective fixed point \([u_\star]\) is locally exponentially attracting. Its local attracting basin gives rise to an open cone of exponentially divergent initializations.
\end{corollary}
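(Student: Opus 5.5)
The plan is to apply \cref{thm:modal} directly at $u_\star$, with $\lambda=S_H$; this value is given by \eqref{eq:qrange}, since $\max q_H=S_H$. A nondegenerate global maximizer is in particular a nondegenerate strict local maximum. By \cref{thm:modal}(ii), every transverse eigenvalue therefore satisfies $\beta_i<S_H$, and all $\beta_i>0$ because $\nabla^2Q(u_\star)\succ0$. The selection threshold then obeys
\[
\eta_{\rm sel}(u_\star)=\frac{2}{S_H+\beta_{\min}}<\frac{2}{S_H}.
\]
Hence every $\eta>2/S_H$ lies above $\eta_{\rm sel}(u_\star)$. Moreover $\eta\neq 1/S_H$, since $1/S_H<2/S_H$, so part (i) applies and the projective map is well defined at $[u_\star]$. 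Part (iii) then makes $[u_\star]$ locally exponentially attracting.

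As a sanity check I will verify the multipliers directly from \eqref{eq:kappa}. For $\eta>2/S_H$ we have $1-\eta S_H<-1$, so $|1-\eta S_H|=\eta S_H-1$. We also have $-1<1-\eta\beta_i$, because $\eta\beta_i<\eta S_H$ while $1-\eta\beta_i\ge 1-\eta S_H+\ldots$. More carefully, $|1-\eta\beta_i|<\eta S_H-1$ is equivalent to the two inequalities $1-\eta\beta_i<\eta S_H-1$ and $\eta\beta_i-1<\eta S_H-1$. The first holds because $\eta(S_H+\beta_i)>2\cdot\frac{S_H+\beta_i}{S_H}\cdot\ldots$; simply, $\eta>2/S_H>2/(S_H+\beta_i)$. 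The second holds because $\beta_i<S_H$. So $|\kappa_i|<1$ for every $i$.

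Next I pass from a projective basin to an open cone. Local exponential attraction gives an open neighbourhood $V$ of $[u_\star]$ in projective space whose trajectories stay in $V$, converge to $[u_\star]$, and avoid $\rho_\eta=0$. The last condition holds because $\rho_\eta(u_\star)=|1-\eta S_H|>1$, so by continuity $\rho_\eta$ is bounded away from zero near $u_\star$, after shrinking $V$ if necessary. Define the cone $C=\{z\neq0:[z]\in V\}$, pulled back to $x$-coordinates via $x=A^{-1}z$. It is open, because $A$ is invertible and the projection $z\mapsto[z]$ is continuous and open.

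For $x_0\in C$ the projective trajectory converges to $[u_\star]$. Part (iv) of \cref{thm:modal} then gives
\[
\lim_{t\to\infty}\frac1t\log L(x_t)=2\log|1-\eta S_H|>0,
\]
which is exponential divergence. A more elementary route is also available: \eqref{eq:rho-main} and continuity give $r_{t+1}/r_t=\rho_\eta(u_t)\ge c>1$ on $V$, so $r_t$ grows geometrically from the start.

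The main obstacle I expect is bookkeeping rather than analysis. It consists of making sure the local basin from (iii) can be taken forward-invariant and contained in the region where the projective map \eqref{eq:projmap} is defined, and checking that the identification $u\sim -u$ in projective space does not break openness of the cone. I would handle both by shrinking $V$ to a small ball on which the linearization bound $|\kappa_i|<1$ gives a strict contraction in an adapted norm. I would also take $C$ to include both $\pm$ rays, which is harmless since $Q$ is even.
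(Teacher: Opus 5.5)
Your proposal is correct and follows the paper's argument: you take $\lambda=S_H$ and use $0<\beta_{\min}<S_H$ to get $\eta_{\rm sel}(u_\star)<2/S_H$. Then \cref{thm:modal}(iii) gives projective attraction for every $\eta>2/S_H$, and $|1-\eta S_H|>1$ lifts the open local basin to an open divergent cone, where your bound $\rho_\eta\ge c>1$ on a forward-invariant neighbourhood makes the divergence immediate without the limit in part (iv). One small slip: the aside $-1<1-\eta\beta_i$ is false whenever $\eta\beta_i>2$. The two-inequality check you give next is the correct one, and the sanity check is not needed in any case.
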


\noindent The second consequence explains exactly how a sharp almost-everywhere edge can fail.

\begin{corollary}[Suboptimal maxima are stable beyond $2/S_H$]
\label{cor:suboptimal}
Suppose $u$ is a nondegenerate strict local maximum of $q_H$ with value $\lambda<S_H$. Then for every
\begin{equation}
 \max\!\left\{\frac2{S_H},\eta_{\rm sel}(u)\right\}
 <\eta<\frac2\lambda,
 \label{eq:stablebeyond}
\end{equation}
there is an open cone of initializations converging exponentially to the origin even though $\eta>2/S_H$.
\end{corollary}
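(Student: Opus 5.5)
The plan is to derive Corollary~\ref{cor:suboptimal} directly from Theorem~\ref{thm:modal}, applied at the suboptimal maximum $u$, and to translate the conclusion from the conjugate $z$-coordinates back to the primal $x$-coordinates.

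First, we check the hypotheses of Theorem~\ref{thm:modal}. Since $u$ is a nondegenerate strict local maximum, part~(ii) gives $\beta_i<\lambda$ for every $i$, so $\eta_{\rm sel}(u)=2/(\lambda+\beta_{\min})>1/\lambda$. Hence every $\eta$ in the window \eqref{eq:stablebeyond} satisfies $\eta>1/\lambda$, and in particular $\eta\neq1/\lambda$. Part~(i) then says the projective map is well-defined at $[u]$. Part~(iii) says $[u]$ is locally exponentially attracting for $\eta>\eta_{\rm sel}(u)$. We can also see this directly from \eqref{eq:kappa}: for $\eta\in(1/\lambda,2/\lambda)$ we have $1-\eta\lambda\in(-1,0)$, and the condition $|1-\eta\beta_i|<\eta\lambda-1$ reduces to $\eta>2/(\lambda+\beta_i)$ together with $\beta_i<\lambda$. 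So there is an open neighbourhood $N$ of $[u]$ in projective space whose forward projective orbits stay defined and converge to $[u]$.

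The step I expect to need care is that the trajectories from $N$ must stay in the region where the projective map is defined, i.e.\ where $\rho_\eta>0$. Near $[u]$ we have $\rho_\eta(u)=|1-\eta\lambda|>0$, and $\rho_\eta$ is continuous, so after shrinking $N$ we may assume $\rho_\eta>0$ on $N$. Local attraction means orbits starting in $N$ never leave a smaller neighbourhood, so the whole trajectory remains defined.

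Next, we apply part~(iv). For every nonzero $z_0$ with $[z_0]\in N$, $\frac1t\log L(x_t)\to2\log|1-\eta\lambda|<0$, because $\eta<2/\lambda$ gives $|1-\eta\lambda|<1$. We can also argue directly: $r_{t+1}/r_t=\rho_\eta(u_t)\to|1-\eta\lambda|<1$, so $r_t\to0$ geometrically. Since $L(x)=\frac12\|z\|_2^2$ when $z=H^{1/2}x$, this is exponential convergence of $x_t$ to the origin. The set of such $z_0$ is the open cone over $N$ with the origin removed. Its preimage under the linear isomorphism $x\mapsto H^{1/2}x$ is an open cone of initializations $x_0$, by the conjugacy \eqref{eq:qgd}. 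Finally, the window \eqref{eq:stablebeyond} is nonempty: $\lambda<S_H$ gives $2/S_H<2/\lambda$, and $\eta_{\rm sel}(u)<2/\lambda$ because $\beta_{\min}<\lambda$. Every such $\eta$ also exceeds $2/S_H$ by assumption, which completes the argument.
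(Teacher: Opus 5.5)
Your proof is correct and is essentially the paper's argument: projective attraction of $[u]$ from \cref{thm:modal}(iii), together with radial contraction $|1-\eta\lambda|<1$, lifted to an open cone. You simply spell out more of the details, namely well-definedness via $\eta>\eta_{\rm sel}(u)>1/\lambda$ and the pullback under $H^{1/2}$.

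There is one small slip in your nonemptiness check. The inequality $\eta_{\rm sel}(u)<2/\lambda$ holds because $\beta_{\min}>0$, which follows from $\nabla^2Q(u)\succ0$. It does not follow from $\beta_{\min}<\lambda$; that fact instead gives $\eta_{\rm sel}(u)>1/\lambda$.
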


\noindent If $q_H$ has both a suboptimal and a nondegenerate global maximum, stable and unstable basins can therefore coexist.

\begin{figure}[th]
\centering
\includegraphics[width=.72\linewidth]{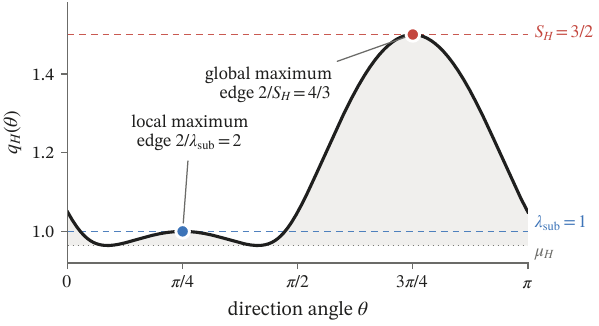}
\caption{The nonlinear Rayleigh landscape $q_H$ on the
Euclidean unit circle for the smooth norm in
\eqref{eq:exactH}. The global and suboptimal maxima are attained
at $\theta=3\pi/4$ and $\theta=\pi/4$, respectively, with
values $S_H=3/2$ and $\lambda_{\rm sub}=1$. Their radial
thresholds are $4/3$ and $2$. The minimum is
$\mu_H=2\sqrt3-5/2$.}
\label{fig:landscape}
\end{figure}
\section{Global geometry of the stability regions}\label{sec:global}
The preceding results describe open basins around particular critical directions, but do not determine their global extent. We now study how the global geometry of these basins is constrained as
the step size varies. Because $Q$ is even, the direction map \eqref{eq:projmap} descends, wherever it is non-zero, to projective space $\mathbb{RP}^{d-1}$. We write $P_\eta$ for this map, and for a subset $E \subseteq \mathbb{RP}^{d-1}$ we write $\operatorname{Cone}_H(E)=\{H^{-1/2}(ru):r\geq0,\ u\in\mathbb S^{d-1},\ [u]\in E\}$ for the associated cone.

\noindent For $\eta>0$ define the high-Rayleigh domain and its eventual-entry set by
\begin{equation}
\mathcal U_\eta:=\{[u]:q_H(u)>2/\eta\},\qquad
\mathcal E_\eta:=\{[u]:P_\eta^n([u])\in\mathcal U_\eta
\text{ for some }n\ge0\},
\label{eq:escape-sets}
\end{equation}
where the iterates in the definition of $\mathcal E_\eta$ are required to be defined through time $n$. For clarity, the direction of a trajectory which reaches zero is not continued beyond that time.

\begin{theorem}[Invariant divergence region]\label{thm:escape}
For every $\eta>0$ the following hold.
\begin{enumerate}[label=(\roman*)]
\item If $[u]\in\mathcal U_\eta$, then $P_\eta([u])$ is defined and
\begin{equation}
q_H(P_\eta([u]))\ge q_H(u),\qquad
\rho_\eta(u)\ge\eta q_H(u)-1>1.
\label{eq:invariant-q}
\end{equation}
In particular, $\mathcal U_\eta$ is forward-invariant.
\item $\mathcal E_\eta$ is open and every nonzero initialization in $\operatorname{Cone}_H(\mathcal E_\eta)$ eventually diverges exponentially. In particular, the convergence basin satisfies
\begin{equation}
\cB_\eta \setminus \{0\}\subseteq
\operatorname{Cone}_H\bigl(\{[u]:q_H(u)\le 2/\eta\}\bigr).
\label{eq:stable-envelope}
\end{equation}
\end{enumerate}
\end{theorem}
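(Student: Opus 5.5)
The plan is to work entirely in the conjugate coordinates $z_t=Ax_t$ of \eqref{eq:qgd} and use the exact radial/projective decomposition of \cref{prop:radial} together with the directional-smoothness identity \eqref{eq:rhoD}. Set $g(z):=\|Az\|_*$, so that $Q=\tfrac12 g^2$ and $q_H(u)=g(u)^2$. Since $g$ is a norm, it is convex and $1$-homogeneous.

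\textbf{Radial bound in (i).} Fix $[u]\in\mathcal U_\eta$, so $\eta q_H(u)>2$. From \eqref{eq:rho} we drop the nonnegative gradient term and get
\[
\rho_\eta(u)^2\ge(1-\eta q_H(u))^2 .
\]
Hence $\rho_\eta(u)\ge \eta q_H(u)-1>1>0$. In particular $\rho_\eta(u)>0$, so $P_\eta([u])$ is defined by \cref{prop:radial}.

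\textbf{Monotonicity of $q_H$ in (i).} Let $u'=P_\eta(u)$, so that $\rho u'=u-\eta\nabla Q(u)$ with $\rho=\rho_\eta(u)$.
\begin{itemize}[leftmargin=*]
\item The subgradient inequality for the norm $g$ at $u$ reads $g(w)\ge |\langle\nabla g(u),w\rangle|$ for all $w$, with $\nabla g(u)=\nabla Q(u)/g(u)$.
\item Take $w=\rho u'$. By Euler's identity $\langle\nabla Q(u),u\rangle=2Q(u)=q_H(u)$, and by \cref{prop:directional} $\|\nabla Q(u)\|_2^2=q_H D_H$. Writing $q=q_H(u)$ and $D=D_H(u)$, this gives
\[
\langle\nabla Q(u),\rho u'\rangle=q(1-\eta D).
\]
\item Squaring the subgradient inequality and dividing by $\rho^2$ yields
\[
q_H(u')=g(u')^2\ge \frac{q(\eta D-1)^2}{\rho^2}.
\]
\end{itemize}
It therefore suffices to show $(\eta D-1)^2\ge\rho^2$. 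By \eqref{eq:rhoD}, $\rho^2=1-2\eta q+\eta^2 qD$, and a short expansion gives
\[
(\eta D-1)^2-\rho^2=\eta(D-q)(\eta D-2).
\]
This is nonnegative because $D\ge q$ (\cref{prop:directional}) and $\eta D\ge\eta q>2$. So $q_H(u')\ge q_H(u)$, and forward invariance of $\mathcal U_\eta$ follows immediately. I expect this step to be the main obstacle: one has to pick the right convexity inequality, and the subgradient of the norm $g$ rather than of $Q$ itself is what makes the algebra close.

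\textbf{Part (ii).} For openness, note that the domain of $P_\eta$, namely $\{\rho_\eta>0\}$, is open and that $P_\eta$ is continuous there, since $q_H$ is $C^1$ on the sphere. Each set $P_\eta^{-n}(\mathcal U_\eta)$ is therefore open, and $\mathcal E_\eta$ is their union.

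For divergence, take a nonzero initialization whose direction enters $\mathcal U_\eta$ at time $N$. Before time $N$ all radial factors are positive, so $r_N>0$. For $t\ge N$, part (i) gives
\[
q_H(u_t)\ge q_H(u_N)\quad\text{and}\quad \rho_\eta(u_t)\ge \eta q_H(u_N)-1=:c>1 .
\]
Hence $r_t\ge c^{\,t-N}r_N$, and $\|x_t\|\ge\|A\|^{-1}r_t$ grows exponentially.

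For the basin inclusion \eqref{eq:stable-envelope}, take $x_0\in\cB_\eta\setminus\{0\}$. Its direction cannot lie in $\mathcal E_\eta\supseteq\mathcal U_\eta$, since otherwise the trajectory would diverge. If instead the trajectory hits zero in one step, then $\rho_\eta(u_0)=0$, which forces $\eta q_H(u_0)=1\le 2$, so $q_H(u_0)\le 2/\eta$ as well. In either case the initial direction satisfies $q_H\le 2/\eta$.
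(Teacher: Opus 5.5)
Your proof is correct and follows essentially the same route as the paper. The shared steps are the radial bound obtained by dropping the gradient term in \eqref{eq:rho}, a convexity inequality at $u$ evaluated at the next direction and closed by the identity $(\eta D-1)^2-\rho^2=\eta(D-q)(\eta D-2)$, and openness of $\mathcal E_\eta$ as a union of preimages of $\mathcal U_\eta$.

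The only variation is that the paper applies the supporting-hyperplane inequality to $Q$ itself, using the representative $(\eta\nabla Q(u)-u)/\rho$, and gets $q_H(u')\ge q\bigl(2(\eta D-1)/\rho-1\bigr)$. That bound also closes from $\eta D-1\ge\rho$. So passing to the norm $\sqrt{2Q}$ is not needed to make the algebra work, although it does give the slightly sharper bound $q\,((\eta D-1)/\rho)^2$.
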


\noindent \Cref{thm:escape} implies that every convergent
trajectory must remain in $\{q_H\le 2/\eta\}$.
As $\eta$ approaches $2/\mu_H$ from below,
this region shrinks towards the global minima
of $q_H$. In particular, for every \(\eta>2/S_H\), the set \(\mathcal{U}_\eta\) is nonempty and open, so an open cone of exponentially divergent initializations exists without any nondegeneracy assumption. The next result shows that, when
these minima are finite and nondegenerate,
they are projectively repelling near this
threshold, so only their invariant lines
remain convergent.

\begin{theorem}[Terminal collapse of the convergence basin]\label{thm:terminal}
Assume $d\ge2$ and the global minima of $q_H$ on $\mathbb{RP}^{d-1}$ form a finite, nonempty set $\{[v_1],\ldots,[v_k]\}$, each with positive-definite spherical Hessian. Then there exists
\[
\eta_*\in\left(\max\left\{\frac2{S_H},\frac1{\mu_H}\right\},\frac2{\mu_H}\right)
\]
such that for every $\eta_*<\eta<2/\mu_H$,
\begin{equation}
\cB_\eta=\bigcup_{j=1}^k\operatorname{span}(H^{-1/2}v_j),
\label{eq:terminal-collapse}
\end{equation}
and every other nonzero initialization diverges exponentially. In particular, almost every initialization diverges throughout this interval, strictly below $2/\mu_H$.
\end{theorem}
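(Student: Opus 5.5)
The plan is to work in the conjugate coordinates $z=H^{1/2}x$ with the projective map $P_\eta$ from \cref{prop:radial}. The idea is to combine the forward-invariant divergence region of \cref{thm:escape} with a uniform projective repulsion at the minima $[v_j]$ for $\eta$ near $2/\mu_H$. A convergent trajectory's direction is then trapped in a shrinking neighbourhood of some $[v_j]$, and it is simultaneously pushed out of that neighbourhood unless it sits exactly at $[v_j]$.

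\textbf{Step 1 (linearization at the minima).} Each $v_j$ is a critical point of $q_H$ with value $\mu_H$, so \cref{thm:modal}(i) gives transverse multipliers $\kappa_i=(1-\eta\beta_i)/(1-\eta\mu_H)$ whenever $\eta\neq 1/\mu_H$. The argument of \cref{thm:modal}(ii), with the inequality reversed, turns the positive-definite spherical Hessian at a minimum into $\beta_i>\mu_H$ for all $i$.
\begin{itemize}
\item At $\eta=2/\mu_H$ the multipliers are $|\kappa_i|=|1-2\beta_i/\mu_H|=2\beta_i/\mu_H-1>1$.
\item Since $Q$ is $C^2$ away from $0$, $P_\eta$ is $C^1$ jointly in $(\eta,[u])$ near $(2/\mu_H,[v_j])$.
\item Hence there are $r>0$, $c>1$ and $\eta_1<2/\mu_H$ with $\eta_1>\max\{2/S_H,1/\mu_H\}$, such that for all $\eta\in(\eta_1,2/\mu_H)$ and all $[u]$ in the ball $B_j(2r)$ around $[v_j]$ we have $\operatorname{dist}(P_\eta[u],[v_j])\ge c\,\operatorname{dist}([u],[v_j])$.
\end{itemize}
Shrinking $r$ further, we also get $P_\eta(B_j(r))\subseteq B_j(2r)$ uniformly in $\eta$, and the balls $B_j(2r)$ are pairwise disjoint.

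\textbf{Step 2 (the sublevel set sits in the balls).} The minima are isolated and $\{[v_j]\}$ is the full minimum set, so compactness of $\mathbb{RP}^{d-1}$ gives $\delta>0$ with $q_H\ge\mu_H+\delta$ off $\bigcup_j B_j(r)$. Choose $\eta_*\ge\eta_1$ so that $2/\eta<\mu_H+\delta$ for $\eta>\eta_*$. Then $\{q_H\le2/\eta\}\subseteq\bigcup_jB_j(r)$, and every point of $B_j(2r)\setminus B_j(r)$ lies in $\mathcal U_\eta$.

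We also check that no direction is lost by hitting the origin. By \eqref{eq:rho}, $\rho_\eta(u)=0$ forces $\nabla_Sq_H(u)=0$ and $q_H(u)=1/\eta$. But $1/\eta<\mu_H$ because $\eta>1/\mu_H$, which is impossible. So $P_\eta$ is defined everywhere.

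\textbf{Step 3 (trapping and collapse).} Take $x_0\in\cB_\eta\setminus\{0\}$. By \cref{thm:escape}(ii), its projective orbit never enters $\mathcal U_\eta$, so every iterate lies in some $B_j(r)$.
\begin{itemize}
\item If $[u_t]\in B_j(r)$, then $[u_{t+1}]\in B_j(2r)$. It cannot lie in the annulus, which is contained in $\mathcal U_\eta$, and it cannot lie in another ball by disjointness. Hence $[u_{t+1}]\in B_j(r)$, and the orbit stays in a single ball forever.
\item Step 1 then gives $\operatorname{dist}([u_t],[v_j])\ge c^t\operatorname{dist}([u_0],[v_j])$, which stays bounded by $r$ only if $[u_0]=[v_j]$.
\end{itemize}
Conversely, on $\operatorname{span}(H^{-1/2}v_j)$ the iteration is $z_{t+1}=(1-\eta\mu_H)z_t$. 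Here $|1-\eta\mu_H|<1$ because $1/\mu_H<\eta<2/\mu_H$, so these lines converge, which proves \eqref{eq:terminal-collapse}.

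Every other nonzero initialization is not in $\cB_\eta$ by the above, so its orbit must eventually enter $\mathcal U_\eta$. Indeed, orbits that never enter $\mathcal U_\eta$ were shown to be exactly the lines $[v_j]$. \cref{thm:escape} then gives exponential divergence. Finally, a finite union of lines is Lebesgue-null since $d\ge2$, which gives the almost-everywhere statement.

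\textbf{Main obstacle.} The delicate part is Step 1: the expansion estimate must hold \emph{uniformly} in $\eta$ on a neighbourhood of fixed size. It must also be compatible with the annulus lying in $\mathcal U_\eta$, which is why $r$ is fixed before $\eta_*$. I would get uniformity by applying the mean value theorem in a chart to $P_\eta$, using continuity of $DP_\eta$ in $(\eta,u)$ and the bound $\|DP_\eta^{-1}\|<1/c$ at $([v_j],2/\mu_H)$.
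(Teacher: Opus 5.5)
Your argument has the same architecture as the paper's proof: uniform projective repulsion at the nondegenerate minima for $\eta$ near $\eta_0=2/\mu_H$ (from joint $C^1$ dependence of $P_\eta$ and a mean-value estimate in a chart), a compactness argument placing $\{q_H\le 2/\eta\}$ inside small balls around the $[v_j]$, global definedness of $P_\eta$ for $\eta>1/\mu_H$, and \cref{thm:escape} to turn leaving the sublevel set into exponential divergence.

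However, one step is false as written. Shrinking $r$ cannot give $P_\eta(B_j(r))\subseteq B_j(2r)$ in general. Near the fixed point $[v_j]$, $P_\eta$ acts to leading order as its derivative, whose multipliers at $\eta_0$ are $\kappa_i=2\beta_i/\mu_H-1$. Shrinking $r$ does not change the ratio of the image radius to $r$ at leading order. So if some $\kappa_i>2$, then $P_\eta(B_j(r))\not\subseteq B_j(2r)$ for every small $r$. This already happens in the Euclidean case $H=\operatorname{diag}(1,2)$: there $\kappa=3$ at $\eta_0$, and $P_\eta$ roughly triples the angle to $[e_1]$. For $k\ge2$ you use exactly this containment to stop an orbit in $B_j(r)$ from jumping into some $B_\ell(r)$ with $\ell\neq j$. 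Without it, the claim that the orbit stays in a single ball, and hence the iterated expansion estimate, is unsupported.

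The repair is what the paper does: decouple the two radii.
\begin{enumerate}
\item Fix disjoint neighbourhoods $V_j$ of the $[v_j]$, for instance $B_j(R)$, whose size does not depend on the small radius.
\item Shrink $\delta$ (your $r$), and the range of $\eta$ below $\eta_0$, until three things hold. The expansion estimate holds on $B_\delta([v_j])$. $P_\eta(B_\delta([v_j]))\subseteq V_j$ uniformly in $\eta$; this follows from joint continuity and $P_\eta([v_j])=[v_j]$, or from taking $\delta\le R/\Lambda$ with $\Lambda$ a uniform Lipschitz bound for $P_\eta$ near $[v_j]$. Finally, $V_j\cap B_\delta([v_\ell])=\emptyset$ for $\ell\neq j$.
\end{enumerate}
Then an orbit confined to $\{q_H\le 2/\eta\}\subseteq\bigcup_\ell B_\delta([v_\ell])$ cannot change balls, and your expansion argument concludes.

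There is also a smaller omission. Choosing $\eta_1\in(\max\{2/S_H,1/\mu_H\},2/\mu_H)$ requires $S_H>\mu_H$, which you do not justify. It holds because $q_H$ cannot be constant: its minimum set is finite, while $\mathbb{RP}^{d-1}$ is infinite for $d\ge2$. The paper makes this point explicitly.
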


\noindent The proof uses only the invariant-divergence region and the fact that a nondegenerate minimum becomes a projective repeller near $2/\mu_H$. It does not assume that typical projective orbits converge to critical rays. Theorems \ref{thm:escape} and \ref{thm:terminal} describe the two global constraints on the intermediate window; its interior can exhibit a more varied basin structure.

\section{Constructions and examples}\label{sec:examples}

Verifications and proofs for this section are in \cref{app:examples}. We first construct arbitrarily many distinct critical-direction thresholds in dimension two.

\begin{proposition}[Arbitrarily many edges]
\label{prop:many}
For every integer $m\ge2$, there exists
a norm on $\R^2$, with $\Phi$ that is $\;C^\infty$ away from $0$, has $\nabla^2\Phi \succ0$ and a positive-definite quadratic objective for which $q_H$ has at least $m$ nondegenerate strict local maxima with pairwise distinct critical values. Each yields a distinct attracting-direction radial threshold.
\end{proposition}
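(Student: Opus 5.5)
The plan is to construct the norm directly through its dual, since $q_H(u)=\|Au\|_*^2$ depends only on the dual norm. The simplest choice is $H=I$, so $A=I$ and $q_H(u)=\|u\|_*^2$ on the Euclidean circle. In polar coordinates I will prescribe
\[
\|(r\cos\theta,r\sin\theta)\|_*=r\,g(\theta)^{1/2},\qquad g(\theta)=1+\varepsilon f(\theta),
\]
where $f$ is a $\pi$-periodic trigonometric polynomial, so the norm is even. I will choose $f$ to have at least $m$ nondegenerate strict local maxima on $[0,\pi)$ with pairwise distinct values. A concrete candidate is
\[
f(\theta)=\cos(2m\theta)+\delta\cos(2\theta),
\]
with $\delta>0$ small. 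For $\delta=0$ there are $m$ nondegenerate maxima, all with value $1$, at $\theta_j=j\pi/m$. A small perturbation preserves nondegenerate maxima by the implicit function theorem, and it changes their values by $\delta\cos(2\theta_j)+O(\delta^2)$. These first-order shifts are pairwise distinct except for the pairing $\theta_j\leftrightarrow\pi-\theta_j$, which gives the same cosine. To break that symmetry I will replace $\cos(2\theta)$ by $\cos(2\theta-\phi)$ for a generic phase $\phi$. Then the values $\cos(2\theta_j-\phi)$, $j=0,\dots,m-1$, are pairwise distinct, because $2\theta_j$ are distinct points mod $2\pi$ that lie in no reflection-symmetric pair about $\phi$ for generic $\phi$.

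The main obstacle is to ensure that $N(x):=\|x\|_*$ so defined is a genuine dual of a norm with the required regularity. Specifically, $\frac12 N^2$ must be $C^\infty$ away from $0$ with positive-definite Hessian. If that holds, its Legendre dual $\Phi=\frac12\|\cdot\|^2$ is also $C^\infty$ away from $0$ with $\nabla^2\Phi\succ0$, because the gradient map of a smooth uniformly convex $2$-homogeneous function is a diffeomorphism of $\R^2\setminus\{0\}$, and $\nabla^2\Phi=(\nabla^2\Phi^*)^{-1}$ at corresponding points.

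Smoothness is immediate since $g>0$ is smooth. For positive definiteness, I write $F=\frac12 r^2 g(\theta)$ and compute its Hessian in the orthonormal polar frame. The $rr$ entry is $g$, the off-diagonal entry is $\frac12 g'$, and the $\theta\theta$ entry is $g+\frac12 g''$. It therefore suffices that
\[
g>0,\qquad g\bigl(g+\tfrac12g''\bigr)>\tfrac14 (g')^2 .
\]
Both hold for $\varepsilon$ small, since $g=1+O(\varepsilon)$ and $g',g''=O(\varepsilon m^2)$. So I will take $\varepsilon<c/m^2$.

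Critical points and values of $q_H=g$ on the circle are exactly those of $f$, scaled. Nondegenerate strict maxima of $f$ are therefore nondegenerate strict maxima of $q_H$, with pairwise distinct values $1+\varepsilon f(\theta_j)$.

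Finally, by \cref{thm:modal}(ii)–(iii), each such maximum $u_j$ is projectively attracting for $\eta>\eta_{\rm sel}(u_j)$. By \cref{thm:modal}(iv), its radial threshold is $2/\lambda_j$, and these thresholds are distinct because the values $\lambda_j$ are distinct. If a non-identity $H$ is desired, the statement is invariant under the substitution $x\mapsto Ax$, so this is unnecessary.
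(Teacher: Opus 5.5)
Your proposal is correct and follows essentially the paper's construction: $H=I$, a dual potential of the form $\tfrac12 r^2\,(1+\varepsilon f(\theta))$ (the paper squares the profile instead, using $(1+\varepsilon f)^2$), $f=\cos(2m\theta)$ plus a small symmetry-breaking term, $\varepsilon$ small to keep the Hessian positive definite, and Legendre duality to recover $\Phi$. The only substantive difference is how the tie between values is broken. The paper adds bump functions with $b_j(\theta_j)=1$ and $b_j'(\theta_j)=0$, so the maxima stay exactly at $\theta_j$ with values $1+\delta a_j$ and no continuation argument is needed. Your single term $\delta\cos(2\theta-\phi)$, with e.g.\ $\phi\notin(\pi/m)\mathbb{Z}$, instead requires the implicit-function theorem and a first-order value expansion; you handle both correctly, and your version gives a real-analytic norm.
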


\noindent These thresholds are robust:
\begin{proposition}[Robustness of the edges]
\label{prop:robust}
Suppose \(q_H\) has finitely many nondegenerate strict local maxima with pairwise distinct critical values. Then every sufficiently small admissible \(C^2\) perturbation preserves nearby nondegenerate local maxima with pairwise distinct critical values. Their selection thresholds and radial stability thresholds vary continuously.
\end{proposition}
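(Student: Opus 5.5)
We would prove \cref{prop:robust} by reducing it to the implicit function theorem applied to the spherical gradient of $q_H$. First we fix what an admissible $C^2$ perturbation means. It is a perturbation of the pair $(\Phi,H)$ such that $\Phi$ stays $C^2$ with $\nabla^2\Phi\succ0$ away from the origin and $H$ stays positive definite. The induced map $(\Phi,H)\mapsto q_H=\|H^{1/2}u\|_*^2$ restricted to $\Sph^{d-1}$ is then continuous into $C^2(\Sph^{d-1})$. This holds because Legendre duality $\Phi\mapsto\Phi^*$ is continuous in $C^2$ on compact sets avoiding the origin when $\nabla^2\Phi\succ0$. Indeed, $\nabla\Phi^*=(\nabla\Phi)^{-1}$ and $\nabla^2\Phi^*(y)=[\nabla^2\Phi(\nabla\Phi^*(y))]^{-1}$, and inversion of a $C^1$ diffeomorphism with uniformly invertible derivative is continuous in $C^1$. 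Also $H\mapsto H^{1/2}$ is smooth on $H\succ0$. This is the step we expect to be the main technical obstacle: controlling $\Phi^*$ in $C^2$ on an annulus $\{c\le\|y\|\le C\}$ containing $H^{1/2}\Sph^{d-1}$, uniformly for nearby data.

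Next, the local maxima persist. Let $u_1,\dots,u_m$ be the given nondegenerate strict local maxima, with values $\lambda_1,\dots,\lambda_m$ pairwise distinct. Each $u_j$ is a zero of the vector field $\nabla_S q_H$, and the derivative of that field at $u_j$ is the spherical Hessian, which is invertible. Working in a chart around $u_j$, the implicit function theorem (or a quantitative inverse function argument, since we only have $C^2$ control of $q$, hence $C^1$ control of $\nabla_S q$) gives a unique critical point $\tilde u_j$ in a small ball $B_j$ for every $\tilde q$ that is $C^2$-close to $q_H$. It also gives that $\tilde u_j$ depends continuously on the perturbation. The spherical Hessian at $\tilde u_j$ is close to the one at $u_j$, so it stays negative definite. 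Hence $\tilde u_j$ is a nondegenerate strict local maximum. Its value $\tilde\lambda_j=\tilde q(\tilde u_j)$ is continuous, so for small enough perturbations the values remain pairwise distinct. It suffices to take the perturbation size below $\tfrac13\min_{i\neq j}|\lambda_i-\lambda_j|$ in the corresponding modulus.

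Finally, the thresholds vary continuously. By \cref{thm:modal}, the radial threshold is $2/\tilde\lambda_j$, and $\tilde\lambda_j>0$ since $\tilde\lambda_j\ge\tilde\mu>0$, so it is continuous. The selection threshold is $2/(\tilde\lambda_j+\tilde\beta_{\min,j})$, where $\tilde\beta_{\min,j}$ is the smallest eigenvalue of $\nabla^2\tilde Q(\tilde u_j)$ on $\tilde u_j^\perp$. Here $\nabla^2\tilde Q=\tilde A\nabla^2\tilde\Phi^*(\tilde A\,\cdot)\tilde A$ depends continuously on the data and on the point $\tilde u_j$, by the first step. The compressed operator on $\tilde u_j^\perp$ varies continuously, for instance after conjugating by a rotation sending $\tilde u_j$ to $u_j$, and the smallest eigenvalue of a symmetric matrix is continuous. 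Thus $\tilde\beta_{\min,j}\to\beta_{\min,j}$, and both thresholds converge to the unperturbed ones.

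We also check that no new global maximum appears that would change the ordering relative to $2/S_H$. This is not required by the statement, but it follows because $\tilde S=\max\tilde q$ is continuous in the sup norm.
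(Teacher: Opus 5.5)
Your proposal is correct and follows essentially the same route as the paper. Both arguments get $C^2$-continuity of $q_H$ from $\nabla\Phi^*=(\nabla\Phi)^{-1}$, $\nabla^2Q=A[\nabla^2\Phi(\nabla\Phi^*(A\,\cdot))]^{-1}A$ and smoothness of $H\mapsto H^{1/2}$, then apply the implicit function theorem to $\nabla_S q_H=0$ using the invertible spherical Hessian, use continuity of the critical values to keep them distinct, and use continuity of $\lambda$ and $\beta_{\min}$ to get continuity of $2/\lambda$ and $2/(\lambda+\beta_{\min})$, with your closing remark on continuity of $S_H$ matching the paper's observation that the strict inequalities $\max\{2/S_H,\eta_{\rm sel}\}<2/\lambda$ persist.
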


\noindent The separation between a local radial threshold and $2/S_H$ can be arbitrarily large:

\begin{proposition}[Arbitrarily large separation]
\label{prop:largegap}
For every $M>0$, there exist a smooth strongly convex norm, a positive-definite $H$, and a nondegenerate strict suboptimal local maximum $u$ such that
\begin{equation}
\frac{\eta_{\rm edge}(u)}{2/S_H}>M.
\label{eq:largeratio}
\end{equation}
In other words, there can be open stable basins for step sizes arbitrarily larger than the edge $2/S_H$.
\end{proposition}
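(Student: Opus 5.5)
We prove \cref{prop:largegap} by an explicit two-dimensional family of norms, using \cref{thm:modal} to certify the suboptimal maximum. Since $\eta_{\rm edge}(u)=2/\lambda$, the ratio in \eqref{eq:largeratio} equals $S_H/\lambda$. So it suffices to construct $q_H$ on $\Sph^1$ with a nondegenerate global maximum value $S_H$ and a nondegenerate strict suboptimal local maximum of value $\lambda$ with $S_H/\lambda>M$. We work primal-side with the generalized Rayleigh quotient $R_H(x)=x^\top Hx/\|x\|^2$. By \cref{lem:critical-duality}, critical directions and critical values of $R_H$ on $\{\|x\|=1\}$ correspond to those of $q_H$. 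We also need nondegeneracy and the strict maximum type to transfer; this should follow from the same duality map, which is a diffeomorphism under our smoothness assumptions. If that is not immediate, we will instead build the example directly by prescribing $q_H$ through the dual norm: take $H=I$, so $q_H(u)=\|u\|_*^2$.

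\textbf{Construction.} The direct route is to prescribe the dual unit ball. With $H=I$ we have $q(u)=\|u\|_*^2=1/\varrho(\theta)^2$, where $\varrho(\theta)$ is the radial function of the dual unit ball $B_*$ in polar angle $\theta$. We want $\varrho$ to have:
\begin{itemize}
\item a nondegenerate strict local minimum at $\theta=\pi/2$ with value $a$, giving $q=1/a^2=S$;
\item a nondegenerate strict local minimum at $\theta=0$ with value $b$, giving $\lambda=1/b^2$.
\end{itemize}
We need $b/a>\sqrt M$, and $B_*$ must be smooth and strongly convex (positive curvature), so that the primal norm is smooth with $\nabla^2\Phi\succ0$. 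A local minimum of the radial function at $\theta=0$ with nonzero curvature requires the boundary there to curve less than the circle of radius $b$, but still positively.

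Concretely, start from an elongated ellipse $x^2/b^2+y^2/a^2=1$ with $b\gg a$. Its radial function has minima at $\pm\pi/2$ (value $a$) and maxima at $0,\pi$ (value $b$). The vertex at $\theta=0$ has curvature radius $a^2/b$, much smaller than $b$, so it is a radial maximum. Modify the ellipse near $\theta=0$ by flattening the tip: replace it by a smooth arc whose curvature radius at the axis exceeds the distance $b'$ to the origin, for instance a large-radius circular arc centred on the negative $x$-axis, glued $C^\infty$ with strictly positive curvature. Then $\theta=0$ becomes a strict nondegenerate local minimum of $\varrho$ with value $b'\approx b$. The global minimum of $\varrho$ stays at $\pi/2$ with value $a$, which is nondegenerate. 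Choosing $b/a$ large gives $S/\lambda\approx(b/a)^2>M$. Strong convexity of $B_*$ ensures $\|\cdot\|_*^2$ is smooth with positive-definite Hessian off the origin. The primal norm, as the polar, then inherits the same properties, since polarity preserves $C^2_+$ bodies.

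\textbf{Checking nondegeneracy and concluding.} A critical point of $\varrho$ gives a critical point of $q=\varrho^{-2}$, and there $q''=-2\varrho''/\varrho^3$. So strict nondegenerate minima of $\varrho$ become nondegenerate strict maxima of $q$. \cref{thm:modal}(ii) then applies at $u=(1,0)$, and the edge is $2/\lambda$, so
\[
\frac{\eta_{\rm edge}(u)}{2/S_H}=\frac{S_H}{\lambda}=\frac{b'^2}{a^2}>M .
\]
Finally, \cref{cor:suboptimal} provides open convergent cones for $\eta$ just below $2/\lambda$, because $\eta_{\rm sel}(u)<2/\lambda$. This justifies the remark that stable basins persist arbitrarily far beyond $2/S_H$.

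\textbf{Main obstacle.} The hard step is the smooth gluing that keeps the curvature strictly positive while reversing the radial convexity at the tip. We will handle it by writing the boundary as a support function $h(\phi)$ and requiring $h+h''>0$ throughout. Take $h$ to be the ellipse support function modified by a smooth bump near $\phi=0$. The bump is chosen so that near $\phi=0$ we have $h\approx b'$ and $h''$ only slightly negative, namely $-b'<h''(0)<0$. This makes the boundary near the tip an arc of radius $h(0)+h''(0)$, which is less than $b'$. Hence the radial function has a local minimum there, with the radial curvature determined by $h''$ (a radial minimum corresponds to $h''(0)<0$). Positivity of $h+h''$ across the transition region can be secured by taking the bump with a small second derivative relative to $h$.
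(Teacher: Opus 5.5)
Your strategy is sound and differs from the paper's. However, the last paragraph, where you carry out the step you yourself call the hard one, has the key sign reversed. As written it would produce the wrong kind of critical point.

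Let $h$ be the support function of $B_*$, with $h'(0)=0$. The support point with normal $e_0$ is $(h(0),0)$, so $\varrho(0)=h(0)$. Also $\varrho\le h$ everywhere, because $\varrho(\theta)e_\theta\in B_*$. Hence $h''(0)<0$ gives $\varrho(\theta)\le h(\theta)<h(0)=\varrho(0)$ for small $\theta\ne0$. That is a strict radial \emph{maximum}, i.e.\ a local minimum of $q$. It is exactly the ellipse tip you wanted to remove, where $h''(0)=-(b^2-a^2)/b$ and $h(0)+h''(0)=a^2/b<b$.

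What you need is the condition from your construction paragraph: $R:=h(0)+h''(0)>b'$, i.e.\ $h''(0)>0$, which gives $\varrho''(0)=b'(1-b'/R)>0$. (The condition $-h(0)<h''(0)<0$ is the correct one for the support function $h_B=1/\varrho$ of the \emph{primal} unit ball, since $q=h_B^2$; it is not the correct one for $B_*$.)

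The same computation rules out a bump with ``small second derivative relative to $h$''. Starting from the ellipse, $h''(0)$ must increase by more than $(b^2-a^2)/b\approx b$. So the change in $h+h''$ near the tip is of order $b$, not small. Positivity of $h+h''$ where the bump turns over, where the ellipse has $h+h''\approx a^2/b$, has to be arranged rather than inherited.

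Either of two short repairs works.
\begin{itemize}
\item Geometric: intersect the ellipse with the discs of a large radius $R>b$ centred at $\pm(b'-R,0)$; these clip only the tips. Then convolve the support function in the normal angle with a smooth, strictly positive, even kernel. This convolves $h+h''$, making it smooth and strictly positive, while $h''(0)\to R-b'>0$ as the kernel concentrates.
\item Explicit, with no gluing: take $\|z\|_*^2=\|Dz\|_p^2+\varepsilon\|z\|_2^2$ with $D=\operatorname{diag}(1/b,1/a)$ and even $p\ge4$. Then $\nabla^2Q\succeq\varepsilon I$, and the unit ball is an elongated body of exactly your flat-tipped kind. Here $q(\theta)=(b^{-p}\cos^p\theta+a^{-p}\sin^p\theta)^{2/p}+\varepsilon$ satisfies $q''(0)=-2/b^2$. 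Moreover $S_H/\lambda\ge(a^{-2}+\varepsilon)/(b^{-2}+\varepsilon)$, which exceeds $M$ for $\varepsilon=b^{-2}$ and $b/a>\sqrt{2M}$.
\end{itemize}

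Compared with the paper, you put the anisotropy on the opposite side of the pairing. The paper fixes the explicit norm $\|x\|^2=\|x\|_p^2+\|x\|_2^2$ and engineers $H$. With $v\propto(1,1)$, $G=\nabla^2\Phi(v)$ and $g=\nabla\Phi(v)$, it takes $H=\tfrac12G+\tfrac12gg^\top$, the rank-one device from the proof of \cref{thm:hilbert}.
\begin{itemize}
\item $Hv=g$ makes $v$ a critical point of $R_H$ with value $1$.
\item The constrained second variation $-z^\top Gz<0$ certifies a nondegenerate strict maximum.
\item Along the anti-diagonal $w$, where the $\ell_p$ Hessian grows like $p-1$, one gets $S_H\ge\tfrac12w^\top Gw>(p-1)/6$.
\end{itemize}
That argument is closed-form and gluing-free. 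It also pins down $\beta=1/2$, so the attracting window $(4/3,2)$ is independent of $p$ while $2/S_H\to0$. It relies on \cref{lem:critical-duality} to pass from $R_H$ to $q_H$.

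Your version keeps $H=I$, so the ratio is just the squared aspect ratio $(b'/a)^2$ of the dual ball, and no duality transfer is needed. (That lemma does transfer nondegenerate strict maxima, so your hedge about it was unnecessary.) Your route exposes the geometric mechanism very directly. The cost is a smoothing argument or an explicit superelliptic choice as above. In the glued form it yields only $\eta_{\rm sel}(u)<2/\lambda$, not an explicit selection threshold.
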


\subsection{An exact smooth two-edge example}\label{sec:l4}
Consider the norm on $\R^2$ defined by
\begin{equation}
\|x\|_{\rm reg}
:=\left(\|x\|_4^2+\frac{1}{\sqrt{2}}\|x\|_2^2\right)^{1/2},
\qquad
H=\frac{1}{2\sqrt{2}}
\begin{pmatrix}
5&-1\\
-1&5
\end{pmatrix}.
\label{eq:exactH}
\end{equation}
The regularization ensures that the Hessian of \(\Phi\) is uniformly positive definite away from the origin, unlike that of the squared \(\ell_4\) norm, as $\Phi(x)=\tfrac12\|x\|_{\rm reg}^2$ is smooth away from
the origin and
$\nabla^2\Phi(x)\succeq 2^{-1/2}I\succ0$
for every $x\ne0$.
Thus the example satisfies the regularity assumptions.
On the projective circle \(\mathbb{RP}^1\), \(q_H\) has exactly two nondegenerate local maxima, represented by \((1,1)\) and \((1,-1)\), with critical values and radial edges
\[
\lambda_{\rm sub}=1,
\quad
\lambda_{\rm top}=S_H=\frac32 \quad \text{ and } \quad \frac2{S_H}=\frac43,
\quad
\frac2{\lambda_{\rm sub}}=2.
\]
The transverse eigenvalues are
\[
\beta_{\rm sub}=\frac34,
\qquad
\beta_{\rm top}=\frac12,
\]
so the suboptimal and global maxima become locally exponentially
attracting in projective space for $\eta>8/7$ and $\eta>1$,
respectively. Consequently, throughout
\begin{equation}
\frac43<\eta<2,
\label{eq:exactcoexist}
\end{equation}
there is an open cone of divergent initializations associated
with the global maximum and an open cone of convergent
initializations associated with the suboptimal maximum.
The calculations are given in \cref{app:l4}.
\Cref{fig:landscape} shows the landscape $q_H$, and
\cref{fig:growth} illustrates the corresponding basins.

\begin{figure}[t]
\centering
\includegraphics[width=.85\linewidth]{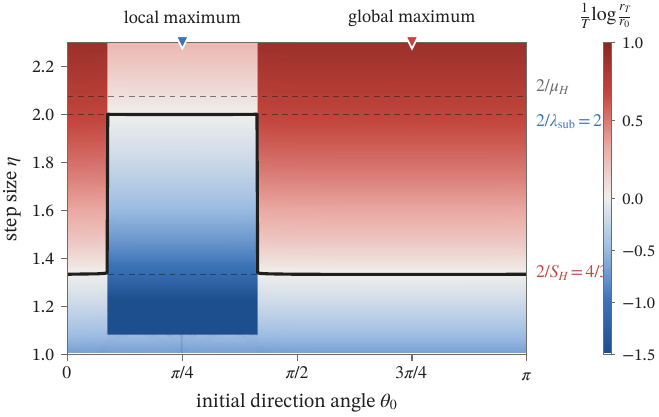}
\caption{Finite-time radial growth $T^{-1}\log(r_T/r_0)$
in the conjugate $z$-coordinates for the example
\eqref{eq:exactH}, over initial directions $\theta_0$
and step sizes $\eta$ ($T=400$; colour scale clipped
to $[-1.5,1]$). Blue denotes negative finite-time growth
and red denotes positive growth; the black contour is
the finite-time zero-growth level set.
For $4/3<\eta<2$, open sectors of convergent and
divergent initializations coexist. For these parameters,
the convergent sector occupies approximately one third
of initial directions in the displayed range.
The dashed levels mark $2/S_H$, $2/\lambda_{\rm sub}$,
and $2/\mu_H$.}
\label{fig:growth}
\end{figure}

\subsection{A three-dimensional illustration}\label{sec:3d}
In dimension three the direction dynamics live on $\mathbb{RP}^2$, and the regimes described by \cref{thm:modal,thm:escape} can be seen directly (though this example is for illustration only, as the norm is not strictly positive definite, but could be regularized to be). \Cref{fig:shells3d} shows an example in which $q_H$ has exactly two local maxima: the global maximum $u_1$, with $\lambda_1=S_H$, and a suboptimal maximum $u_2$, with $\lambda_2<S_H$. Its parameters and the computation of all thresholds are given in \cref{app:3d}. The relevant thresholds are ordered as
$
\eta_{\rm sel}(u_1)<2/{S_H}<\eta_{\rm sel}(u_2)<2/{\lambda_2}<\frac2{\mu_H},
$
and each panel of \cref{fig:shells3d} lies in a different interval between them. Below $2/S_H$ every initialization converges. Between $2/S_H$ and $\eta_{\rm sel}(u_2)$ the line through $u_2$ still converges, since $|1-\eta\lambda_2|<1$, but $[u_2]$ does not attract nearby directions, and numerically at $\eta=0.70$, every sampled initialization diverges. Above $\eta_{\rm sel}(u_2)$ the direction $[u_2]$ becomes attracting, and an open cone of convergent initializations appears around it (\cref{cor:suboptimal}). Above $2/\lambda_2$ this cone diverges as well. Throughout, the convergent cone stays outside the forward-invariant region $\mathcal U_\eta$, which grows with $\eta$.

\section{Discussion}\label{sec:discussion}

We have shown that the almost-everywhere sharp stability
threshold of Euclidean gradient descent characterizes
Hilbertian geometry. Beyond this setting, distinct
attracting critical directions can have different radial
stability thresholds, producing coexisting convergent and
divergent basins. The global analysis further constrains
their geometry: the region $\{q_H>2/\eta\}$ is
forward-invariant and divergent, while, under nondegeneracy
of the global minima, almost every initialization diverges
sufficiently close to $2/\mu_H$. These results constrain
the convergence basins as $\eta$ varies, but do not imply
that the basins themselves are nested. The results characterize stability near nondegenerate critical directions and constrain the global convergence basin, but do not classify all projective dynamics in the intermediate regime.

\subsection{On Muon}
The largest motivation for this work comes from the general interest in non-Euclidean norms, following the success of Muon \cite{jordan2024muon,bernstein2024old}. It is normally introduced as spectral norm descent, which does not satisfy our smoothness assumptions. However, considering finite even $p$-Schatten norms \cite{shumaylov2026muonspecialrandominverted}
with potentials
$
\Phi_{p}(X)
=\frac12\|X\|_{S_p}^2
$
are smooth and approach the squared spectral norm as $p\to\infty$.
The corresponding limiting dual potential is the squared nuclear norm. Although this limit is not globally smooth, it is smooth at full-rank matrices, allowing a local application of our projective stability analysis. Interestingly, in this limit the interval between the edge and radial instability collapses.

\begin{proposition}[Attracting critical directions of spectral descent are radially unstable]
Let
$
Q(Z)=\frac12\|\mathcal AZ\|_*^2,
$
where $\mathcal A=H^{1/2}$ is a positive-definite
operator on $\mathbb R^{m\times n}$, with
$\min(m,n)\ge2$. Suppose $u$ is a nondegenerate
strict local maximum of $q=2Q$ on the Frobenius unit
sphere, with $\mathcal Au$ of full rank and critical
value $\lambda$. Then the smallest transverse eigenvalue of
$\nabla^2Q(u)$ is zero, and hence
\[
\eta_{\mathrm{sel}}(u)
=\eta_{\mathrm{edge}}(u)
=\frac{2}{\lambda}.
\]
Thus $[u]$ is projectively linearly unstable
throughout its radially contracting regime
(except where the projective map is undefined),
and locally exponentially attracting for
$\eta>2/\lambda$.
\end{proposition}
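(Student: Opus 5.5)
The strategy is to exhibit an explicit null direction of $\nabla^2 Q(u)$ transverse to $u$, coming from the $1$-homogeneity of the nuclear norm in the singular values. We then feed $\beta_{\min}=0$ into \cref{thm:modal}. Write $M:=\mathcal A u$, which has full rank, and take its thin SVD $M=U\Sigma V^\top$ with distinct-or-not singular values $\sigma_1,\dots,\sigma_r>0$, $r=\min(m,n)$. Near a full-rank matrix the nuclear norm is smooth; in fact it is locally the restriction of a linear functional along certain curves. Specifically, $\|M\|_*=\langle UV^\top, M\rangle$, and for any $W$ in the "singular-vector-preserving" directions $W=U D V^\top$ with $D$ diagonal, we get $\|M+tW\|_*=\sum_i(\sigma_i+tD_{ii})$ for small $t$. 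So $g(Y):=\|Y\|_*$ is affine along the whole linear family $\{U(\Sigma+tD)V^\top\}$. Then $\Psi(Y)=\tfrac12 g(Y)^2$ satisfies
\[
\nabla^2\Psi(M)[W,W]=\langle \nabla g(M),W\rangle^2+g(M)\nabla^2 g(M)[W,W]=\bigl(\textstyle\sum_i D_{ii}\bigr)^2,
\]
because $\nabla^2 g(M)[W,W]=0$ along these affine directions.

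The first step is therefore to choose $D$ diagonal with $\operatorname{tr}D=0$ and $D\neq0$. This is possible since $r\ge2$. It gives $W:=UDV^\top$ with $\nabla^2\Psi(M)[W,W]=0$. Since $\nabla^2\Psi(M)$ is positive semidefinite (by convexity), we conclude $\nabla^2\Psi(M)W=0$. Next we pull back: $\nabla^2Q(u)=\mathcal A\nabla^2\Psi(\mathcal Au)\mathcal A$, so $w:=\mathcal A^{-1}W$ satisfies $\nabla^2Q(u)w=0$. The point to check is that $w$ lies in, or projects nontrivially onto, $u^\perp$. By Euler's identity for the $2$-homogeneous $Q$ we have $\nabla^2Q(u)u=\nabla Q(u)=\lambda u$ with $\lambda>0$. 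So $u$ is an eigenvector with a nonzero eigenvalue, and by symmetry the kernel is orthogonal to $u$. Hence $w\in u^\perp$ is a genuine transverse eigenvector with $\beta=0$. Since all $\beta_i\ge0$ (semidefiniteness), this gives $\beta_{\min}=0$. Note that the paper's claim that $\beta_i>0$ relied on strict convexity of $\Phi^*$, which fails here; only semidefiniteness is used, and \cref{thm:modal}(ii)–(iii) still go through with $\beta_i\in[0,\lambda)$ because their proofs only use the linearization \eqref{eq:kappa}.

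Then $\eta_{\rm sel}(u)=2/(\lambda+0)=2/\lambda=\eta_{\rm edge}(u)$. For the dynamical conclusions we read off \eqref{eq:kappa}: the multiplier for $\beta=0$ is $\kappa=1/(1-\eta\lambda)$. This has modulus $\ge1$ exactly when $|1-\eta\lambda|\le1$, i.e. for $0<\eta\le2/\lambda$, with $\eta=1/\lambda$ excluded since the map is undefined there. So on the radially contracting regime the direction is linearly unstable. For $\eta>2/\lambda$, every $\beta_i\in[0,\lambda)$ satisfies $|1-\eta\beta_i|<\eta\lambda-1$. This requires checking both signs: $1-\eta\beta_i<\eta\lambda-1$ holds because $\eta(\lambda+\beta_i)>2$, and $\eta\beta_i-1<\eta\lambda-1$ is trivial. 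Hence all $|\kappa_i|<1$, and local exponential attraction follows as in \cref{thm:modal}(iii).

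The main obstacle is justifying smoothness, and the precise second-order behavior, of the squared nuclear norm at a full-rank matrix with possibly repeated singular values. Repeated singular values make the SVD non-unique. I would avoid this by not differentiating the SVD at all. Instead I would use the formula $\|Y\|_*=\operatorname{tr}\bigl((Y^\top Y)^{1/2}\bigr)$ (taking $m\ge n$), which is real-analytic where $Y^\top Y\succ0$. The affine-line computation then only requires the exact identity $\|U(\Sigma+tD)V^\top\|_*=\operatorname{tr}(\Sigma+tD)$ for small $t$, which holds for any fixed SVD frame.
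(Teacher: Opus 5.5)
Your proposal is correct and follows essentially the same route as the paper. The trace-free diagonal perturbation $\mathcal A^{-1}(UDV^\top)$ built from the full-rank SVD leaves $\|\mathcal A(\cdot)\|_*$ constant, which is what your $(\operatorname{tr}D)^2=0$ computation shows. Positive semidefiniteness then puts this direction in the kernel of $\nabla^2Q(u)$. Symmetry together with $\nabla^2Q(u)u=\lambda u$, $\lambda>0$, places it in $u^\perp$, and the multipliers $(1-\eta\beta_i)/(1-\eta\lambda)$ give the thresholds. The paper differs only cosmetically: it records the full $(r-1)$-dimensional kernel, and it re-derives the linearization directly rather than citing \cref{thm:modal}. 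As you rightly note, that theorem must be read here with $\beta_i\ge0$ rather than $\beta_i>0$.
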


The proof is given in \Cref{app:spectral}.

\bibliographystyle{plainnat}
\bibliography{bib}

\appendix
\crefalias{section}{appendix}
\crefalias{subsection}{subappendix}

\section{Proofs of the structural results}\label{app:structural}

\subsection{Radial/projective dynamics}\label{app:normal}

\begin{lemma}[Critical-point correspondence]
\label{lem:critical-duality}
The map $[x]\mapsto[Ax]$ identifies
the critical directions of $R_H$ and $q_H$,
preserving critical values and nondegenerate
strict local maxima.
\end{lemma}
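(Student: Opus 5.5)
The plan is to realize both quotients as partial maximizations of a single function of two projective variables. Define, for $x\neq0$ and $u\neq0$,
\[
F(x,u):=\frac{(x^\top A u)^2}{\|x\|^2\,\|u\|_2^2}.
\]
\emph{Step 1: both quotients are partial maxima of $F$.}
\begin{itemize}[leftmargin=*]
\item Fix $x$. By Cauchy--Schwarz, $\max_u F(x,u)=|Ax|_2^2/\|x\|^2=R_H(x)$, attained exactly at $[u]=[Ax]$.
\item Fix $u$. By the definition of the dual norm, $\max_x F(x,u)=\|Au\|_*^2/\|u\|_2^2=q_H(u/|u|_2)$.
\item Under the standing assumptions ($\Phi\in C^2$ away from $0$ with $\nabla^2\Phi\succ0$), the unit ball is strictly convex. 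So the $x$-maximizer is unique up to sign, namely $[x^\star(u)]=[\nabla\Phi^*(Au)]$. This maximizer depends $C^1$ on $[u]$, since $\nabla\Phi^*$ is $C^1$ away from $0$.
\end{itemize}
Thus $R_H$ and $q_H$ are the two marginal value functions of one degree-zero, bihomogeneous function on $\mathbb{RP}^{d-1}\times\mathbb{RP}^{d-1}$.

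\emph{Step 2: critical points and values.} Because the inner maximizer is unique and $C^1$, the envelope theorem gives $d q_H([u])=\partial_u F(x^\star(u),u)$. Hence $[u]$ is critical for $q_H$ iff $[u]$ also maximizes $F(x^\star(u),\cdot)$, i.e.\ iff $[u]=[Ax^\star(u)]$.
\begin{itemize}[leftmargin=*]
\item In that case $(x^\star,u)$ is a joint critical point of $F$. Applying the envelope theorem in the other direction, $dR_H([x^\star])=\partial_xF(x^\star,Ax^\star)=0$. So $[x^\star]$ is critical for $R_H$, and $[Ax^\star]=[u]$.
\item Conversely, if $[x]$ is critical for $R_H$, take $u=Ax$. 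Then $\partial_xF(x,Ax)=0$, and by concavity-free uniqueness of the inner maximizer (the first-order condition $Au\propto\nabla\Phi(x)$ characterizes it) we get $x=x^\star(Ax)$ projectively.
\end{itemize}
For the converse I will check this explicitly via the Lagrange condition $Hx=\lambda\nabla\Phi(x)$. This gives $Ax\propto A^{-1}\nabla\Phi(x)$, and hence, using $\nabla\Phi^*=(\nabla\Phi)^{-1}$, that $\nabla\Phi^*(A(Ax))\propto x$.
\begin{itemize}[leftmargin=*]
\item The correspondence is therefore the bijection $[x]\mapsto[Ax]$ on critical sets.
\item Critical values agree because both equal $F(x^\star,u)$.
\end{itemize}

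\emph{Step 3: strict local maxima.} Suppose $[x]$ is a strict local maximum of $R_H$ with value $\lambda$, and set $u=Ax$. For $[u']$ near $[u]$, the point $[x^\star(u')]$ is near $[x]$ by continuity, and so
\[
q_H(u')=F(x^\star(u'),u')\le R_H(x^\star(u'))\le\lambda .
\]
If equality holds throughout, then strictness forces $[x^\star(u')]=[x]$. Equality in the first inequality then forces $[u']=[Ax]=[u]$. The reverse direction is symmetric, using $R_H(x')=F(x',Ax')\le q_H(Ax')$.

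\emph{Step 4: nondegeneracy (main obstacle).} Because the inner maxima are nondegenerate, the Hessian of each marginal is a Schur complement of the joint Hessian $\nabla^2F$ at the critical pair $(x,u)$:
\[
\nabla^2R_H=F_{xx}-F_{xu}F_{uu}^{-1}F_{ux},\qquad
\nabla^2 q_H=F_{uu}-F_{ux}F_{xx}^{-1}F_{xu}.
\]
Here $F_{uu}\prec0$ by Cauchy--Schwarz, and $F_{xx}\prec0$ by strict convexity of the ball, i.e.\ $\nabla^2\Phi\succ0$. I will then use the standard inertia identity (Haynsworth): both Schur complements are negative definite iff the full joint Hessian $\nabla^2F$ is negative definite. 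This shows that nondegenerate strict maxima correspond.

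The delicate points are twofold. First, the Hessians must be taken intrinsically on the projective spaces; I will do this in charts orthogonal to $x$ and $u$, using criticality so that tangential second derivatives are chart-independent. Second, I must verify $F_{xx}\prec0$ on the tangent space from the hypothesis $\nabla^2\Phi\succ0$.
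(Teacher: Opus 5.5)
Your argument is correct, and it takes a genuinely different route from the paper's.

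\textbf{The paper's route.} It computes directly. It writes the critical equation $Hx=\lambda\nabla\Phi(x)$ and inverts it via $\nabla\Phi^*=(\nabla\Phi)^{-1}$ to get $\nabla Q(Ax)=\lambda Ax$. It then computes the two Hessians explicitly, $2(H-\lambda G)$ and $2(AG^{-1}A-\lambda I)|_{u^\perp}$ with $G=\nabla^2\Phi(x)$. Finally it shows these have the same inertia using $C=G^{-1/2}HG^{-1/2}$, $L=AG^{-1/2}$, the polar decomposition $L=UC^{1/2}$ and Sylvester's law.

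\textbf{Your route.} You exhibit $R_H$ and $q_H$ as the two partial maxima of one bihomogeneous function $F$.
\begin{itemize}
\item Critical points and values follow from the envelope theorem.
\item Strict local maxima follow from the sandwich $F(x^\star(u'),u')\le R_H(x^\star(u'))\le\lambda$.
\item Nondegeneracy follows from Haynsworth additivity applied to the two Schur complements of the joint Hessian. This in fact yields equal inertia, exactly as in the paper.
\end{itemize}

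\textbf{What each buys.} Your approach explains \emph{why} the correspondence holds: both quotients are value functions of the single operator-norm problem $\max_{x,u}F$, which also gives $\max R_H=\max q_H=S_H$ for free. Your Step~3 also handles strict local maxima with no nondegeneracy assumption, which the paper's inertia argument does not address. The paper's approach is shorter and purely linear-algebraic. It also produces the explicit formula \eqref{eq:SphericalHessian}, which is reused in the proofs of \cref{thm:modal} and \cref{thm:terminal}; your proof would not supply it.

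\textbf{Two details to tidy.}
\begin{itemize}
\item The inner quotients also have critical points where $x^\top Au=0$. You exclude these only implicitly; they cannot occur at the relevant pairs because $q_H>0$ and $x^\top Hx>0$.
\item $F_{xx}\prec0$ holds because $x\mapsto F(x,u)$ is the generalized Rayleigh quotient of the rank-one matrix $(Au)(Au)^\top$. Its second variation at the maximizer $x^\star$ is a positive multiple of $-z^\top\nabla^2\Phi(x^\star)z$ on the tangent space $\{z:(Au)^\top z=0\}$, so it is negative definite there.
\end{itemize}
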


\begin{proof}[Proof of \cref{lem:critical-duality}]
At non-zero $x$ we have
\begin{equation*}
    \nabla R_H(x) = \frac{1}{\Phi(x)} \left(Hx - R_H(x)\nabla \Phi(x)\right)
\end{equation*}
Hence, the critical-point equation for $R_H$ is $Hx=\lambda\nabla\Phi(x)$, with $\lambda=R_H(x)$. Assume this holds. Since $\nabla\Phi^*$ is the inverse of $\nabla\Phi$ and is homogenous of degree $1$, we obtain
\begin{equation*}
    \nabla\Phi^*(Hx) = \nabla \Phi^*(\lambda \nabla \Phi(x)) = \lambda x
\end{equation*}
Therefore,
\[
\nabla Q(Ax)=A\nabla\Phi^*(Hx)=\lambda Ax.
\]
Euler's identity then gives $q_H(Ax/\|Ax\|_2)=\lambda$. The converse follows as $A$ is invertible.

Fix a critical direction $[x]$ and choose its representative $x$ so that $\|x\|=1$. Let $G = \nabla^2 \Phi(x) \succ 0$. Then by homogeneity we have
\begin{equation*}
    Gx = \nabla\Phi(x)
\end{equation*}
Hence the critical point equation for $R_H$ gives
\begin{equation*}
    (H - \lambda G)x = 0
\end{equation*}

We compute
\begin{equation*}
    \nabla^2 R_H = 2(H - \lambda G)
\end{equation*}
This vanishes on $x$, hence its signature is equal to the signature upon restriction to the tangent space of the norm sphere at $x$. 

The corresponding critical point on the Euclidean sphere is $u = Ax / \sqrt{\lambda}$. We compute the Hessian at this point to be
\begin{equation*}
    \nabla^2 q_H(u) = 2A \nabla^2 \Phi^*(Au) A 
\end{equation*}
We compute this middle term as follows. Using that $\nabla \Phi$ and $\nabla \Phi^*$ are inverse, we obtain
\begin{equation*}
    \nabla^2 \Phi^*(\nabla \Phi(x)) = (\nabla^2 \Phi(x))^{-1} = G^{-1}
\end{equation*}
By the critical point equation
\begin{equation*}
    \nabla \Phi(x) = \frac{Hx}{\lambda} = \frac{Au}{\sqrt{\lambda}}
\end{equation*}
The Hessian of $\Phi^*$ is $0$-homogenous hence
\begin{equation*}
    \nabla^2 \Phi^*(Au) = \nabla^2 \Phi^*\left(\frac{Au}{\sqrt{\lambda}}\right) = G^{-1}
\end{equation*}
Combining this,
\begin{equation*}
    \nabla^2q_H(u) = 2AG^{-1}A
\end{equation*}
We can now compute the Hessian of $q_H$ on the unit sphere. Let $\gamma$ be a smooth curve on $S^{d-1}$ such that $\gamma(0) = u$, $\gamma'(0) = w \in u^{\perp} = T_uS^{d-1}$. Then we have
\begin{equation*}
    \frac{d^2}{ds^2} q_H(\gamma(s)) \rvert_{s=0} = w^T \nabla^2 q_H (u) w  + \langle \nabla q_H(u), \gamma''(0)\rangle = w^T \nabla^2 q_H(u) w + 2\lambda \langle u, \gamma''(0)\rangle 
\end{equation*}
Double differentation of $\gamma(s)^T\gamma(s) = 1$ gives that
\begin{equation*}
    \langle u, \gamma''(0)\rangle = - ||w||_2^2
\end{equation*}
Hence the spherical Hessian is
\begin{equation}\label{eq:SphericalHessian}
\operatorname{Hess}_{\mathbb S^{d-1}}q_H(u)
=
2\left(AG^{-1}A-\lambda I\right)
\big|_{u^\perp}.
\end{equation}
Set
\[
C=G^{-1/2}HG^{-1/2},
\qquad L=AG^{-1/2}.
\]
Then
\[
H-\lambda G
=G^{1/2}(C-\lambda I)G^{1/2},
\]
while
\[
AG^{-1}A=LL^\top,
\qquad L^\top L=C.
\]
By the polar decomposition, there exists an orthogonal
matrix $U$ such that $L=UC^{1/2}$. Consequently,
\[
AG^{-1}A-\lambda I
=U(C-\lambda I)U^\top.
\]
Thus $H-\lambda G$ and $AG^{-1}A-\lambda I$
have the same inertia. Both have a radial null
direction, represented by $x$ and $u$, respectively.
Since the tangent spaces of the corresponding norm
and Euclidean spheres are complementary to these
radial directions, their restricted Hessians have
the same inertia. In particular, either Hessian is
negative definite on its tangent space if and only
if the other is. Hence nondegenerate strict local
maxima correspond.
\end{proof}

\begin{proof}[Proof of \cref{prop:radial}]
Because $Q$ is $2$-homogeneous,
\[
Q(ru)=r^2Q(u)=\frac{r^2}{2}q_H(u).
\]
Differentiation in Euclidean polar coordinates gives \eqref{eq:polargrad}. Substituting into \eqref{eq:qgd} yields
\[
z_{t+1}
=r_t\left[(1-\eta q_H(u_t))u_t-\frac\eta2\nabla_Sq_H(u_t)\right].
\]
The two terms are orthogonal, which gives \eqref{eq:rho}; division by the norm gives \eqref{eq:projmap}.
\end{proof}

\begin{proof}[Proof of \cref{prop:directional}]
Let $g=Au$ and $d=\nabla\Phi^*(g)$. Then
\[
\nabla Q(u)=Ad,
\qquad
q_H(u)=\|g\|_*^2.
\]
The duality-map identities imply $\|d\|=\|g\|_*$, hence
\[
D_H(u)
=\frac{d^\top Hd}{\|d\|^2},
\]
which is exactly the quadratic curvature along the update direction and is bounded above by $S_H$. Using \eqref{eq:polargrad},
\[
\|\nabla Q(u)\|_2^2
=q_H(u)^2+\frac14\|\nabla_Sq_H(u)\|_2^2,
\]
which proves \eqref{eq:Ddecomp}. Substitution into \eqref{eq:rho} gives \eqref{eq:rhoD}.
\end{proof}

\section{Proofs of the local stability results}\label{app:modal}

\begin{proof}[Proof of \cref{thm:modal}]
At a critical point $u$ of $q_H$, \eqref{eq:polargrad} gives
\[
\nabla Q(u)=\lambda u.
\]
Thus the ray is invariant and its radial multiplier is $1-\eta\lambda$.

By Euler's identity for a $2$-homogeneous function,
\[
B_uu=\nabla Q(u)=\lambda u.
\]
Let $h\in u^\perp$ satisfy $B_uh=\beta_i h$. Linearizing $G_\eta(z)=z-\eta\nabla Q(z)$ gives
\[
DG_\eta(u)h=(1-\eta\beta_i)h.
\]
Relative to the radial multiplier $1-\eta\lambda$, the projective multiplier is therefore \eqref{eq:kappa}.

At a critical point, by \eqref{eq:SphericalHessian} the spherical Hessian is
\[
\nabla_S^2q_H(u)
=2(B_u-\lambda I)|_{u^\perp}.
\]
Thus the spherical Hessian is negative definite precisely when $\beta_i<\lambda$ for all $i$, which is equivalent to $u$ being a nondegenerate strict local maximum.

At such a maximum, strict linear contraction in projective coordinates is equivalent to
\[
|1-\eta\beta_i|<|1-\eta\lambda|
\quad\text{for all }i.
\]
Squaring and factoring gives
\[
\eta(\lambda-\beta_i)\bigl(\eta(\lambda+\beta_i)-2\bigr)>0.
\]
Since $\lambda>\beta_i$, this is equivalent to
\[
\eta>\frac2{\lambda+\beta_i}.
\]
Maximizing the bound over $i$ gives \eqref{eq:selection}. In this range, the fixed point is locally exponentially attracting by the contraction theorem for $C^1$ maps. Below the threshold (away from $\eta=1/\lambda$), some transverse multiplier has modulus greater than one. 

Finally, if a nonzero projective orbit converges to $[u]$ and
$\eta\ne1/\lambda$, continuity of \eqref{eq:rho} gives
\[
\rho_\eta(u_t)\longrightarrow |1-\eta\lambda|>0.
\]
Since
\[
r_t=r_0\prod_{k=0}^{t-1}\rho_\eta(u_k),
\]
we have
\begin{equation*}
    \frac{1}{t}\log r_t
    = \frac{1}{t}\left(
        \log r_0 + \sum_{k=0}^{t-1}\log \rho_\eta(u_k)
    \right).
\end{equation*}
Moreover,
$\log \rho_\eta(u_k) \to \log \lvert 1-\eta\lambda\rvert$.
Thus, for any fixed $\varepsilon>0$, there exists $K$ such that
for all $k>K$,
\begin{equation*}
    \left|
        \log \rho_\eta(u_k)
        - \log \lvert 1-\eta\lambda\rvert
    \right| < \varepsilon.
\end{equation*}
For $t>K+1$, we therefore have
\begin{align*}
    \left|
        \frac{1}{t}\log r_t
        - \log \lvert 1-\eta\lambda\rvert
    \right|
    &=
    \left|
        \frac{C}{t}
        + \frac{1}{t}\sum_{k=K+1}^{t-1}
        \left(
            \log \rho_\eta(u_k)
            - \log \lvert 1-\eta\lambda\rvert
        \right)
    \right| \\
    &\le
    \frac{\lvert C\rvert+(t-1-K)\varepsilon}{t},
\end{align*}
where
\begin{equation*}
    C = \log r_0
        + \sum_{k=0}^{K}
        \left(
            \log \rho_\eta(u_k)
            - \log \lvert 1-\eta\lambda\rvert
        \right)
\end{equation*}
is a constant independent of $t$.
Therefore, for all sufficiently large $t$,
\begin{equation*}
    \left|\frac{1}{t} \log r_t - \log |1 -\eta\lambda|\right| < 2\varepsilon.
\end{equation*}
Since $\varepsilon>0$ was arbitrary,
\begin{equation*}
    \frac{1}{t} \log r_t \rightarrow \log |1 - \eta \lambda|.
\end{equation*}
Because $L(x_t)=r_t^2/2$, this proves \eqref{eq:modegrowth}.
\end{proof}

\begin{proof}[Proof of \cref{cor:first}]
At a nondegenerate global maximum, $\lambda=S_H$ and $0< \beta_{\min}<S_H$, so
\[
\eta_{\rm sel}<\frac2{S_H}.
\]
For every $\eta>2/S_H$, $[u_\star]$ is therefore projectively attracting and its radial multiplier has magnitude $|1-\eta S_H|>1$. Its local projective basin is open, hence its cone is an open divergent set.
\end{proof}

\begin{proof}[Proof of \cref{cor:suboptimal}]
The interval \eqref{eq:stablebeyond} is nonempty because both $2/S_H$ and $\eta_{\rm sel}(u)$ are strictly smaller than $2/\lambda$. In this interval $[u]$ is therefore projectively attracting while $|1-\eta\lambda|<1$, so its open local projective basin lifts to an open cone of exponentially convergent trajectories.
\end{proof}

\section{Proofs for the global stability regions}\label{app:global}

\begin{proof}[Proof of \cref{thm:escape}]
Write $q=q_H(u)$, $g=\nabla Q(u)$, $D=\|g\|_2^2/q$ and $\rho=\|u-\eta g\|_2$. By Euler's identity $g^\top u=q$, and Cauchy--Schwarz gives $D\ge q$. Put $a=\eta q$ and $b=\eta D$. The radial identity yields
\[
\rho^2=1-2a+ab,
\qquad
(b-1)^2-\rho^2=(b-a)(b-2).
\]
If $[u]\in\mathcal U_\eta$, then $b\ge a>2$. Therefore $\rho\ge a-1>1$ by \eqref{eq:rho}, and $b-1\ge\rho>0$. In particular, the next projective iterate exists. Because $Q$ is even, the next projective direction can be represented by $v=(\eta g-u)/\rho$. The supporting-hyperplane inequality for the convex function $Q$ gives
\begin{align*}
q_H(P_\eta([u]))=2Q(v)
&\ge 2Q(u)+2g^\top(v-u)\\
&=q\left(\frac{2(b-1)}{\rho}-1\right)\ge q.
\end{align*}
This proves invariance. For any such initial direction, $q_H(u_t)\ge q_H(u_0)>2/\eta$, so
\[
r_t\ge r_0\bigl(\eta q_H(u_0)-1\bigr)^t\longrightarrow\infty
\]
exponentially.

Write $U_n \subseteq \mathbb{RP}^{d-1}$ for the locus where $P_{\eta}^n$ is defined. This is open, as it is precisely the locus where the $n$-th iterate of the gradient descent step does not vanish. Then $P^n_\eta$ is a continuous map $U_n \rightarrow \mathbb{RP}^{d-1}$. We have that $(P^n_\eta)^{-1}(\mathcal{U}_\eta)$ is the locus of directions whose projective gradient descent iterations are defined to step $n$ and fall into $\mathcal{U}_\eta$, this is open in $U_n$ and hence open in $\mathbb{RP}^{d-1}$. Note that
\begin{equation*}
    \mathcal{E}_\eta = \bigcup_{n \geq 0} (P^n_\eta)^{-1}(\mathcal{U}_\eta)
\end{equation*}

Thus $\mathcal E_\eta$, being the union of the open preimages of $\mathcal U_\eta$, is open. Every trajectory in $\mathcal{E}_\eta$ is eventually exponentially diverging by the argument above. Therefore no convergent trajectory can enter $\mathcal U_\eta$, and \eqref{eq:stable-envelope} then follows.
\end{proof}

\begin{proof}[Proof of \cref{thm:terminal}]
Let $\eta_0=2/\mu_H$. At a global minimum $[v_j]$, the spherical Hessian is positive definite, so by \eqref{eq:SphericalHessian} its transverse eigenvalues satisfy $\beta_{j,\ell}>\mu_H$. At $\eta_0$ the denominator $1-\eta_0\mu_H=-1$, and the projective derivative has multipliers
\[
\kappa_{j,\ell}(\eta_0)=\frac{2\beta_{j,\ell}}{\mu_H}-1>1.
\]
The derivative on the tangent space is symmetric in orthonormal projective coordinates. Hence its smallest singular value is greater than one.

We now make the repulsion global away from the critical rays. Equip $\mathbb{RP}^{d-1}$ with its standard Riemannian metric and choose disjoint coordinate balls $V_j$ centered at the finitely many $[v_j]$, whose closures are disjoint. Since $P_{\eta_0}([v_j])=[v_j]$ and its derivative has smallest singular value greater than one, continuity of the derivative in both $\eta$ and direction gives $c>1$, $\delta>0$ and $\varepsilon>0$ such that, in normal coordinates $\xi_j$ centered at $[v_j]$, whenever $\eta_0-\varepsilon<\eta\le\eta_0$ and $\operatorname{dist}([u],[v_j])<\delta$,
\begin{equation}
\operatorname{dist}(P_\eta([u]),[v_j])
\ge c\,\operatorname{dist}([u],[v_j]),
\label{eq:local-repel}
\end{equation}
and $P_\eta([u])\in V_j$. Indeed, after shrinking $\delta$ and $\varepsilon$, the derivative of the coordinate expression differs in operator norm from its derivative at $(\eta_0,[v_j])$ by less than half the latter's smallest singular value minus one. The mean-value integral along the radial coordinate segment then proves \eqref{eq:local-repel}. Shrinking $\delta$ and $\varepsilon$ again we can ensure $P_{\eta}([u]) \in V_j$ by continuity of $P_{\eta}$ and openness of $V_j$. We may also require that $\overline{B_\delta([v_j])}\subset V_j$ and that $V_j$ meets no $B_\delta([v_\ell])$ for $\ell\ne j$.

Because the minimum set is finite, compactness of projective space implies that, for $\eta$ sufficiently close to $\eta_0$ from below,
\begin{equation}
K_\eta:=\{[u]:q_H(u)\le2/\eta\}
\subset\bigcup_{j=1}^k B_\delta([v_j]).
\label{eq:small-sublevel}
\end{equation}
Indeed, let $V_\delta = \mathbb{RP}^{d-1} \setminus \bigcup_{j=1}^k B_{\delta}([v_j])$, this is a closed subset of $\mathbb{RP}^{d-1}$. Hence it is compact. The collection of opens $K_\eta^c \cap V_\delta$ for $\eta < \eta_0$ form an open cover of $V_\delta$. Further, they are obviously filtered in the sense that $\eta < \eta'$ implies $K_{\eta}^c \subseteq K_{\eta'}^c$. Therefore compactness implies there is some $\eta' < \eta_0$ such that $V_\delta \subseteq K_{\eta'}^c$. Any $\eta$ such that $\eta' \leq \eta < \eta_0$ will then satisfy \eqref{eq:small-sublevel}.

Choose $\eta_*$ sufficiently close to $\eta_0$ from below such that \eqref{eq:local-repel} and \eqref{eq:small-sublevel} hold on $(\eta_*,\eta_0)$ and further that $\eta_*>\max\{2/S_H,1/\mu_H\}$. This second condition is possible as $S_H > \mu_H$ as $q_H$ cannot be constant. Indeed, a constant function on a positive dimensional smooth manifold cannot have isolated critical points.

For $\eta>1/\mu_H$ the projective update is actually globally defined as if $u-\eta\nabla Q(u)=0$, then Euler's identity gives $q_H(u)=1/\eta<\mu_H$, a contradiction. Consider now a direction whose entire orbit remains in $K_\eta$. By \eqref{eq:small-sublevel} it belongs at each time to one of the balls $B_\delta([v_j])$. The image containment in $V_j$ and the separation of the balls show that it cannot move between different balls while remaining in $K_\eta$. Thus it stays in a single ball. If its initial direction differs from $[v_j]$, repeated application of \eqref{eq:local-repel} makes its distance from $[v_j]$ at least $c^t$ times the initial distance, contradicting confinement to that ball. Therefore every direction other than the $[v_j]$ eventually leaves $K_\eta$, enters $\mathcal U_\eta$, and diverges exponentially by \cref{thm:escape}.

Finally, each minimum ray is invariant and satisfies $z_{t+1}=(1-\eta\mu_H)z_t$. Since $\eta\in(1/\mu_H,2/\mu_H)$, its iterates converge to zero. This proves \eqref{eq:terminal-collapse} and the almost-everywhere claim, since finitely many lines have Lebesgue measure zero.
\end{proof}

\section{Proof of the Hilbertian characterization}\label{app:hilbert}

\begin{lemma}
\label{lem:parallelogram}
Let $G_v:=\nabla^2\Phi(v)$. If
\begin{equation}
 d^\top G_vd\le\|d\|^2
 \qquad\text{for all }v\neq0\text{ and }d\in\R^d,
 \label{eq:hessianbound}
\end{equation}
then $\|\cdot\|$ satisfies the parallelogram law and is Hilbertian.
\end{lemma}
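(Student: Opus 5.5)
The plan is to integrate the Hessian bound \eqref{eq:hessianbound} along line segments. This gives a one-sided parallelogram inequality. Applying it to a change of variables gives the reverse inequality. Equality then yields the parallelogram law, and the Jordan--von Neumann theorem identifies the norm as Hilbertian.

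Fix $x,y\in\R^d$ and first assume the segment $\{x+ty:t\in[-1,1]\}$ avoids the origin. Set $f(t)=\Phi(x+ty)$. Then $f$ is $C^2$ on $[-1,1]$ with $f''(t)=y^\top G_{x+ty}y\le\|y\|^2$. Taylor's formula with integral remainder, written symmetrically about $t=0$, gives
\[
f(1)+f(-1)-2f(0)=\int_{-1}^{1}(1-|t|)\,f''(t)\,dt\le\|y\|^2\int_{-1}^1(1-|t|)\,dt=\|y\|^2 .
\]
In terms of the norm, this reads
\[
\|x+y\|^2+\|x-y\|^2\le 2\|x\|^2+2\|y\|^2 .
\]

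To remove the assumption that the segment avoids the origin, use $d\ge2$. If the segment passes through $0$, perturb $x$ to $x+\varepsilon w$ with $w$ not parallel to $y$, so that the perturbed segment misses the origin. Both sides of the inequality are continuous in $x$, so letting $\varepsilon\to0$ gives the inequality for all $x,y$. Alternatively, $\Phi$ is $C^1$ on all of $\R^d$ with $\nabla\Phi(0)=0$, by $1$-homogeneity of $\nabla\Phi$. Hence $f'$ is continuous and Lipschitz with a.e.\ second derivative bounded by $\|y\|^2$, and the integration by parts still applies.

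For the reverse inequality, apply the bound to $x'=(a+b)/2$ and $y'=(a-b)/2$ for arbitrary $a,b$. This gives
\[
\|a\|^2+\|b\|^2\le 2\left\|\tfrac{a+b}{2}\right\|^2+2\left\|\tfrac{a-b}{2}\right\|^2=\tfrac12\bigl(\|a+b\|^2+\|a-b\|^2\bigr),
\]
which is the opposite inequality. Combining the two yields the parallelogram law $\|a+b\|^2+\|a-b\|^2=2\|a\|^2+2\|b\|^2$. By Jordan--von Neumann, $\langle a,b\rangle:=\tfrac14(\|a+b\|^2-\|a-b\|^2)$ is an inner product inducing $\|\cdot\|$. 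In finite dimensions this means $\|x\|^2=x^\top Px$ with $P\succ0$.

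The only delicate point is the regularity at the origin, which I will handle by the perturbation argument; the rest is routine.
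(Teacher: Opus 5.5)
Your proposal is correct and follows the paper's proof essentially step for step: you integrate the Hessian bound along $t\mapsto x+ty$ against the triangular kernel, substitute $\bigl((a+b)/2,(a-b)/2\bigr)$ to get the reverse inequality, and conclude with Jordan--von Neumann. The only difference is cosmetic: your main treatment of a segment through the origin perturbs $x$ off the line (using $d\ge2$) and passes to the limit, whereas the paper splits the integral at the singular point using continuity of $f'$, which is the alternative you also give.
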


\begin{proof}
Fix $x,y$ and set $f(t)=\Phi(x+ty)$. Wherever $x+ty\neq0$,
\[
f''(t)=y^\top G_{x+ty}y\le\|y\|^2.
\]
The function \(f(t)=\Phi(x+ty)\) is continuously differentiable on \(\mathbb R\). Except possibly at the unique value for which \(x+ty=0\), its second derivative exists and satisfies \(f''(t)\leq\|y\|^2\). Splitting the integral at that point, when it exists, and using continuity of \(f'\) yields the triangular-kernel inequality.
\[
\|x+y\|^2+\|x-y\|^2\le2\|x\|^2+2\|y\|^2.
\]
Apply the same inequality to $(x+y)/2$ and $(x-y)/2$ to obtain the reverse inequality. Hence equality holds, and the Jordan--von Neumann theorem \citep{jordan1935inner} implies that the norm is induced by an inner product.
\end{proof}

\begin{proof}[Proof of \cref{thm:hilbert}]
Suppose first that $\|x\|^2=x^\top Px$ with $P\succ0$. Then
\[
R_H(x)=\frac{x^\top Hx}{x^\top Px}
\]
is an ordinary generalized Rayleigh quotient. After $y=P^{1/2}x$, its local maxima are precisely the top eigenspace of $P^{-1/2}HP^{-1/2}$, proving (i)$\Rightarrow$(ii). The update becomes
\[
y_{t+1}=(I-\eta M)y_t,
\qquad
M=P^{-1/2}HP^{-1/2}.
\]
If $\eta>2/\lambda_{\max}(M)=2/S_H$, every initialization with a nonzero top-eigenspace component diverges; the remaining set is a proper linear subspace. Thus (i)$\Rightarrow$(iii).

Now assume the norm is not Hilbertian. By \cref{lem:parallelogram}, \eqref{eq:hessianbound} fails. Hence there is a unit vector $v$ and a vector $d$ such that
\[
d^\top G_vd>\|d\|^2,
\qquad
G_v:=\nabla^2\Phi(v).
\]
Equivalently $S_{G_v}>1$. Let
\[
g:=\nabla\Phi(v)=G_vv
\]
and, for $0<\varepsilon<1$, define
\begin{equation}
H_\varepsilon:=(1-\varepsilon)G_v+\varepsilon gg^\top.
\label{eq:Heps}
\end{equation}
Since $g^\top v=1$,
\[
H_\varepsilon v=g,
\]
so $v$ is a critical point of $R_{H_\varepsilon}$ with value $1$. On the tangent space
\[
T_v=\{z:g^\top z=0\},
\]
we have
\[
H_\varepsilon z=(1-\varepsilon)G_vz.
\]
Therefore the constrained second variation at $v$ is
\[
2z^\top(H_\varepsilon-G_v)z
=-2\varepsilon z^\top G_vz<0.
\]
Thus $v$ is a strict local maximum. Since $H_\varepsilon\to G_v$ and $S_{G_v}>1$, for all sufficiently small $\varepsilon$ we still have $S_{H_\varepsilon}>1$, so this local maximum is suboptimal. This proves failure of (ii). By \cref{lem:critical-duality}, it is also a nondegenerate suboptimal maximum of $q_{H_\varepsilon}$. Thus \cref{cor:suboptimal} then gives an open convergent cone for some $\eta>2/S_{H_\varepsilon}$, proving failure of (iii).
\end{proof}

\section{Proofs and calculations for the examples}\label{app:examples}

\subsection{The two-edge example}\label{app:l4}

\begin{proof}[Verification of the claims in \cref{sec:l4}]
\emph{Regularity.} The function $\|x\|_4^2=(x_1^4+x_2^4)^{1/2}$ is convex and smooth away from the origin, so $\nabla^2\bigl(\tfrac12\|x\|_4^2\bigr)\succeq0$ there. Adding $\tfrac{1}{2\sqrt2}\|x\|_2^2$ gives $\nabla^2\Phi(x)\succeq2^{-1/2}I$ for all $x\neq0$.

\emph{Critical values and $S_H$.} We work with the generalized Rayleigh quotient $R_H(x)=x^\top Hx/\|x\|_{\rm reg}^2$, whose critical values agree with those of $q_H$. By homogeneity write $x=(1,t)$. Then
\begin{equation}
R_H(1,t)
=\frac{5(1+t^2)-2t}{2\sqrt2\sqrt{1+t^4}+2(1+t^2)}.
\label{eq:Rexact}
\end{equation}
Since $H$ and the norm are invariant under swapping coordinates, $R_H(1,t)=R_H(1,1/t)$, so the direction $(0,1)$ behaves like $t=0$. The numerator of $\partial_tR_H(1,t)$ vanishes exactly when
\[
(t-1)(t+1)\Bigl(\sqrt2\,(t^2-5t+1)+\sqrt{1+t^4}\Bigr)=0.
\]
At $t=\pm1$,
\[
R_H(1,1)=1,
\qquad
R_H(1,-1)=\frac32,
\qquad
\partial_t^2R_H(1,1)=-\frac14,
\qquad
\partial_t^2R_H(1,-1)=-1,
\]
so both are nondegenerate strict local maxima. The last factor can only vanish for $t>0$; with $s=t+1/t\ge2$ it reads $\sqrt{s^2-2}=\sqrt2\,(5-s)$, whose admissible root is $s=10-4\sqrt3$. This gives the two remaining critical points, both with value
\[
R_H=\frac{5s-2}{20-2s}=2\sqrt3-\frac52,
\]
which is therefore $\mu_H$. Finally, $R_H(1,t)\le3/2$ is equivalent to $2(t^2-t+1)\le3\sqrt2\sqrt{1+t^4}$, which holds because
\[
18(1+t^4)-4(t^2-t+1)^2=2(t+1)^2(7t^2-10t+7)\ge0.
\]
Hence $S_H=3/2$.

\emph{Transverse eigenvalues.} At a critical direction $u$ of $q_H$, with $g=Au$ and $y=\nabla\Phi^*(g)$,
\[
B_u=\nabla^2Q(u)=A\,\nabla^2\Phi^*(g)\,A=A\,\bigl(\nabla^2\Phi(y)\bigr)^{-1}A.
\]
The vectors $(1,1)$ and $(1,-1)$ are eigenvectors of $H$, with eigenvalues $\sqrt2$ and $3/\sqrt2$, and $\nabla\Phi$ maps each of these lines to itself, so $y$ is parallel to $u$. Since $\nabla^2\Phi$ is $0$-homogeneous, it suffices to evaluate it at $y=(1,\pm1)$, where
\[
\nabla^2\Phi(1,\pm1)=2\sqrt2\,I-\frac{1}{\sqrt2}\,yy^\top.
\]
This has eigenvalue $\sqrt2$ along $y$ and $2\sqrt2$ orthogonal to it. Multiplying the eigenvalues of $H$ by those of $(\nabla^2\Phi)^{-1}$ along the same directions gives, at $u\parallel(1,1)$,
\[
\lambda_{\rm sub}=\sqrt2\cdot\frac1{\sqrt2}=1,
\qquad
\beta_{\rm sub}=\frac3{\sqrt2}\cdot\frac1{2\sqrt2}=\frac34,
\]
and at $u\parallel(1,-1)$,
\[
\lambda_{\rm top}=\frac3{\sqrt2}\cdot\frac1{\sqrt2}=\frac32,
\qquad
\beta_{\rm top}=\sqrt2\cdot\frac1{2\sqrt2}=\frac12.
\]
\cref{thm:modal} therefore gives
\[
\eta_{\rm sel,sub}=\frac2{1+3/4}=\frac87,
\qquad
\eta_{\rm edge,sub}=2,
\qquad
\eta_{\rm sel,top}=\frac2{3/2+1/2}=1,
\qquad
\eta_{\rm edge,top}=\frac43.
\]
Since $\max\{2/S_H,\eta_{\rm sel,sub}\}=4/3$, coexistence holds throughout \eqref{eq:exactcoexist}.
\end{proof}

\subsection{Arbitrarily many edges}\label{app:many}

\begin{proof}[Proof of \cref{prop:many}]

Take $H=I$. Fix $m\ge 2$ and let
$\theta_j=j\pi/m$, $j=0,\ldots,m-1$. Choose smooth
$\pi$-periodic functions $b_j$ with pairwise disjoint
supports modulo $\pi$, satisfying
\[
b_j(\theta_j)=1,\qquad b_j'(\theta_j)=0.
\]
For example, one may take bump functions on a small enough neighborhood around $\theta_j$ and then extend periodically. 

For pairwise distinct $a_j\in\mathbb R$, set
\[
f(\theta)=\cos(2m\theta)
+\delta\sum_{j=0}^{m-1}a_jb_j(\theta).
\]
At each $\theta_j$,
\[
f'(\theta_j)=0,\qquad
f''(\theta_j)=-4m^2+\delta a_jb_j''(\theta_j)<0
\]
for sufficiently small $\delta>0$. Thus the
$\theta_j$ are nondegenerate strict local maxima
with pairwise distinct values
$f(\theta_j)=1+\delta a_j$.

\noindent For $z=r(\cos\theta,\sin\theta)\ne0$ and $\varepsilon>0$, define
\[
Q_\varepsilon(z)=\frac{r^2}{2}(1+\varepsilon f(\theta))^2,
\qquad Q_\varepsilon(0)=0.
\]
Because $f$ is smooth and $\pi$-periodic, $Q_\varepsilon$ is even, $C^\infty$ away from zero, and positively $2$-homogeneous. Its Hessian depends continuously on $(\varepsilon,\theta)$ and equals $I$ when $\varepsilon=0$. By compactness of the unit circle, for sufficiently small positive $\varepsilon$,
\[
\nabla^2Q_\varepsilon(z)\succeq\tfrac12 I
\qquad (z\ne0),
\]
and $1+\varepsilon f(\theta)>0$ for all $\theta$. Thus $\|z\|_*:=\sqrt{2Q_\varepsilon(z)}$ is a norm: its unit ball is the centrally symmetric convex set $\{Q_\varepsilon\le1/2\}$, and the gauge equals $\sqrt{2Q_\varepsilon}$ by homogeneity. The Legendre dual potential $\Phi=Q_\varepsilon^*$ is $C^\infty$ off zero with
\[
\nabla^2\Phi(x)=\bigl(\nabla^2Q_\varepsilon(\nabla\Phi(x))\bigr)^{-1}\succ0
\qquad(x\ne0),
\]
so its associated primal norm satisfies all standing assumptions.

Finally, $q_I(u_\theta)=2Q_\varepsilon(u_\theta)=(1+\varepsilon f(\theta))^2$. The selected maxima of $f$ therefore give nondegenerate strict local maxima of $q_I$ with pairwise distinct critical values and hence distinct attracting-direction radial thresholds.
\end{proof}

\subsection{Robustness}\label{app:robust}

\begin{proof}[Proof of \cref{prop:robust}]
Write $\Phi=\frac12\|\cdot\|^2$, $A=H^{1/2}$,
and $Q(z)=\Phi^*(Az)$. We first verify that
sufficiently small admissible $C^2$ perturbations
of $(\Phi,H)$ induce small $C^2$ perturbations
of $q_H=2Q|_{\mathbb S^{d-1}}$.

Indeed, since $\nabla^2\Phi\succ0$ away from zero,
$\nabla\Phi$ is a $C^1$ diffeomorphism of
$\mathbb R^d\setminus\{0\}$, with inverse
$\nabla\Phi^*$. The inverse-function theorem,
applied uniformly on compact sets away from zero,
shows that this inverse varies continuously in
$C^1$ under the prescribed perturbations.
Moreover,
\[
\nabla Q(z)=A\nabla\Phi^*(Az),
\]
and
\[
\nabla^2Q(z)
=A\bigl[\nabla^2\Phi(\nabla\Phi^*(Az))\bigr]^{-1}A.
\]
Since $H\mapsto H^{1/2}$ is analytic on the
positive-definite cone, these identities imply
$C^2$ continuity of $Q$, and hence of $q_H$,
on the Euclidean unit sphere.

Let $[u_1],\ldots,[u_m]$ be the specified
nondegenerate strict local maxima, with pairwise
distinct values $\lambda_1,\ldots,\lambda_m$.
Choose disjoint projective coordinate
neighbourhoods of these points. In each chart,
the critical-point equation is
\[
\nabla q_H=0.
\]
Its derivative with respect to the coordinates
is the spherical Hessian, which is negative
definite and therefore invertible at each $u_j$.
The implicit-function theorem yields a unique
nearby critical point $[\widetilde u_j]$ for
every sufficiently small perturbation.
Continuity of the Hessian ensures that these
critical points remain nondegenerate strict
local maxima.

Their critical values $\widetilde\lambda_j$
depend continuously on the perturbation.
Since the original values are pairwise distinct
and their number is finite, the perturbed
values remain pairwise distinct.

At each continued critical direction, the
transverse eigenvalues of $\nabla^2 Q$ vary
continuously, as do the critical value and
therefore
\[
\eta_{\mathrm{sel}}
=\frac{2}{\lambda+\beta_{\min}},
\qquad
\eta_{\mathrm{edge}}=\frac{2}{\lambda}.
\]
Furthermore, $S_H=\max q_H$ varies continuously
by compactness of the sphere.

Finally, every strictly suboptimal maximum
satisfies
\[
\max\left\{\frac{2}{S_H},
\eta_{\mathrm{sel}}\right\}
<\frac{2}{\lambda}.
\]
These strict inequalities persist under
sufficiently small perturbations. Thus the
corresponding open stable-beyond-$2/S_H$
intervals persist, with continuously varying
endpoints.
\end{proof}

\subsection{Arbitrarily large separation}\label{app:largegap}

\begin{proof}[Proof of \cref{prop:largegap}]
Let $p>2$ be an even integer and define the norm
\[
\|x\|=\bigl(\|x\|_p^2+\|x\|_2^2\bigr)^{1/2}
\]
on $\R^2$. Its squared-norm potential
\[
\Phi(x)=\frac12\|x\|_p^2+\frac12\|x\|_2^2
\]
is $C^\infty$ away from the origin and satisfies
\[
\nabla^2\Phi(x)
=\nabla^2\!\left(\frac12\|x\|_p^2\right)+I
\succeq I\succ0
\qquad (x\ne0).
\]
Thus the norm satisfies the standing assumptions.

Set
\[
a=2^{-1/p},\qquad
c_p=2^{1-2/p},
\qquad
v=\frac{a(1,1)}{\sqrt{1+c_p}},
\qquad
w=\frac{a(1,-1)}{\sqrt{1+c_p}}.
\]
Then $\|v\|=\|w\|=1$. Let
\[
G=\nabla^2\Phi(v),
\qquad
g=\nabla\Phi(v).
\]
By $2$-homogeneity, $Gv=g$ and $g^\top v=1$.
Direct differentiation gives
\begin{equation}
g^\top w=0,
\qquad
w^\top Gw=\frac{p-1+c_p}{1+c_p}.
\label{eq:lpcurvature}
\end{equation}

Define
\[
H:=\frac12G+\frac12gg^\top.
\]
Since $G\succ0$, we have $H\succ0$. Moreover,
\[
Hv=\frac12Gv+\frac12g(g^\top v)=g.
\]
Hence $v$ is a critical point of the generalized
Rayleigh quotient
\[
R_H(x)=\frac{x^\top Hx}{\|x\|^2}
\]
with critical value $\lambda=1$.

On the tangent space
$T_v=\{z:g^\top z=0\}$, the constrained
second variation is
\[
2z^\top(H-G)z=-z^\top Gz<0
\qquad (z\in T_v\setminus\{0\}).
\]
Thus $v$ is a nondegenerate strict local maximum.

To identify its transverse eigenvalue, let
$A=H^{1/2}$ and $Q(z)=\Phi^*(Az)$.
At the corresponding critical direction,
the Hessian of $Q$ is $AG^{-1}A$, whose eigenvalues
equal those of
\[
G^{-1/2}HG^{-1/2}
=\frac12I+
\frac12(G^{-1/2}g)(G^{-1/2}g)^\top.
\]
Since
\[
g^\top G^{-1}g=g^\top v=1,
\]
these eigenvalues are $1$ and $1/2$.
The radial eigenvalue is $\lambda=1$, so the
transverse eigenvalue is $\beta=1/2$.
Consequently, the associated projective direction
is attracting for $\eta>4/3$, and its radial
stability edge is
\[
\eta_{\rm edge}=\frac2\lambda=2.
\]

On the other hand, evaluating generalized
sharpness at $w$ and using \eqref{eq:lpcurvature}
gives
\[
S_H\ge w^\top Hw
=\frac12w^\top Gw
=\frac{p-1+c_p}{2(1+c_p)}.
\]
Therefore
\[
\frac{\eta_{\rm edge}}{2/S_H}
=S_H
\ge\frac{p-1+c_p}{2(1+c_p)}
>\frac{p-1}{6},
\]
where the last inequality uses $c_p<2$.
Choosing an even integer $p>\max\{6M+1,7\}$ yields
\[
\frac{\eta_{\rm edge}}{2/S_H}>M.
\]

Since $\beta<\lambda$, the critical direction
has an open projectively attracting basin
throughout
\[
\frac43<\eta<2.
\]
As $S_H>M$, this basin remains radially stable
for step sizes arbitrarily large relative
to $2/S_H$. This proves the claim.
\end{proof}

\subsection{The three-dimensional example}\label{app:3d}
In the conjugate coordinates of \cref{sec:normal}, the example of \cref{sec:3d} is
\[
Q(z)=\frac12\|Az\|_{4/3}^2,
\qquad
A=H^{1/2}=
\begin{pmatrix}
1.153&0.174&0.020\\
0.174&1.323&0.171\\
0.020&0.171&0.982
\end{pmatrix},
\]
so the primal norm is $\|\cdot\|_4$ on $\R^3$ and $H=A^2$. This norm does not satisfy the standing assumptions everywhere: $\nabla^2\Phi$ degenerates on the coordinate axes, and correspondingly $q_H$ fails to be $C^2$ where a coordinate of $Au$ vanishes. This does not affect the results we use. At the two maxima below, every coordinate of $Au$ is nonzero, so $Q$ is smooth near them and \cref{thm:modal} applies locally. \Cref{thm:escape} uses only that $Q$ is convex, even, $2$-homogeneous and $C^1$.

The critical directions were computed numerically. Riemannian gradient ascent of $q_H$ from $2\times10^4$ uniformly random initial directions converges, up to sign, to exactly two local maxima. Their transverse eigenvalues $\beta_1,\beta_2$ were obtained from a finite-difference Hessian of $Q$ restricted to $u^\perp$:
\begin{center}
\begin{tabular}{lccccc}
\toprule
 & $u$ & $\lambda=q_H(u)$ & $\beta_1,\beta_2$ & $\eta_{\rm sel}(u)$ & $2/\lambda$\\
\midrule
$u_1$ & $(0.5381,\ 0.7226,\ 0.4339)$ & $3.5287$ & $0.5069,\ 0.7411$ & $0.4956$ & $0.5668$\\
$u_2$ & $(-0.5953,\ 0.7370,\ -0.3200)$ & $1.5636$ & $0.8395,\ 1.3350$ & $0.8323$ & $1.2791$\\
\bottomrule
\end{tabular}
\end{center}
Both are nondegenerate strict local maxima, since $\beta_i<\lambda$, and $u_1$ is the global maximum, so $S_H=3.5287$. The global minimum $\mu_H\approx0.9050$, so $2/\mu_H\approx2.210$, was computed by derivative-free minimization; it is attained where the first coordinate of $Au$ vanishes. The step sizes $\eta=0.52,\,0.70,\,1.00,\,1.35$ in \cref{fig:shells3d} therefore lie in the four intervals determined by these thresholds.

In \cref{fig:shells3d}, the colour at each direction $u$ is the sign of $T^{-1}\log(r_T/r_0)$ for the iteration \eqref{eq:qgd} started at $z_0=u$, with $T=300$; the iterate is renormalized after every step, which is exact because $\nabla Q$ is $1$-homogeneous. Trajectories show every second iterate of the projective map, joined by shortest arcs in $\mathbb{RP}^2$. Arrows show only the direction of the two-step displacement, because a single step can move a direction by tens of degrees.
\section{Muon example}\label{app:spectral}

\paragraph{The spectral-norm limit.}
Muon motivates considering matrix norms approaching the spectral
norm, but its normalized polar update is not the fixed-step
duality-map iteration studied here. For a matrix parameter
$X\in\mathbb R^{m\times n}$, put
\[
 \Phi_{p}(X)
 =\frac12\|X\|_{S_p}^{2}
  ,
 \qquad p\in\{4,6,8,\ldots\}.
\]
These potentials satisfy our smoothness and positive-definite
Hessian assumptions away from zero. As $p\to\infty$, the primal potential approaches
$\frac12\|X\|_{\mathrm{op}}^{2}$, whose convex conjugate is
$\frac12\|G\|_*^{2}$.

The limiting squared nuclear norm is not globally $C^2$, picking up singularities along rank-deficient matrices. The following local result is therefore proved directly
in the spectral limit, at points where the transformed matrix
has full rank. It also admits a precise continuation from
the unregularized finite-$p$ dual potentials.

We consider $\mathbb R^{m\times n}$ with the Frobenius inner product,
\begin{equation*}
    \langle A, B \rangle = \operatorname{Tr}(A^TB)
\end{equation*}
This induces a norm we write as $\|\cdot\|_F$.

Let $\mathcal A=H^{1/2}$
be a positive-definite, self-adjoint linear operator on this
matrix space, and define
\[
 Q_\infty(Z):=\frac12\|\mathcal AZ\|_*^2,
 \qquad
 q_\infty(u):=2Q_\infty(u),\quad \|u\|_F=1.
\]
Here $\mathcal AZ$ denotes the action of the
Hessian-square-root linear operator on the matrix
parameter, not ordinary left matrix multiplication.

\begin{proposition}
[Collapse of the selection interval in spectral geometry]
\label{prop:spectral_selection_collapse}
Let $r=\min\{m,n\}\ge2$, and let
$u\in\mathbb R^{m\times n}$ satisfy $\|u\|_F=1$.
Assume that $G:=\mathcal Au$ has rank $r$ and that
$u$ is a critical point of $q_\infty$ on the Frobenius
unit sphere. Set $\lambda=q_\infty(u)>0$ and let
\[
 B:=\left.\nabla^2Q_\infty(u)\right|_{u^\perp}
\]
denote the transverse Hessian, where $u^\perp$ is
taken with respect to the Frobenius inner product.
Then:

\begin{enumerate}
\item[(i)]
$B$ is positive semidefinite and has a kernel of
dimension at least $r-1$. In particular, its
smallest eigenvalue is
\[
 \beta_{\min}=0.
\]

\item[(ii)]
For $\eta>0$, $\eta\ne1/\lambda$, the projective map
induced by
\[
 Z\mapsto Z-\eta\nabla Q_\infty(Z)
\]
is well-defined near $[u]$ and has a transverse
multiplier
\[
 \kappa_0(\eta)=\frac{1}{1-\eta\lambda}
\]
with multiplicity at least $r-1$.

Consequently, $[u]$ is projectively linearly unstable
for every
\[
 0<\eta<2/\lambda,\qquad \eta\ne1/\lambda,
\]
though its invariant line is radially contracting
for those step sizes. 

\item[(iii)]
If, in addition, $u$ is a nondegenerate strict
local maximum of $q_\infty$ on the Frobenius unit
sphere, then $[u]$ is locally exponentially
attracting in projective space for every
$\eta>2/\lambda$.

It is not locally exponentially attracting for
$0<\eta\le2/\lambda$ wherever the projective map
is defined. Hence its projective selection threshold
and its radial threshold coincide:
\[
 \eta_{\mathrm{sel}}(u)
 =\eta_{\mathrm{edge}}(u)
 =\frac{2}{\lambda}.
\]

In particular, this critical direction has no
interval on which it is simultaneously projectively
exponentially attracting and radially contracting.

\item[(iv)]
Suppose the nondegeneracy assumption in (iii) holds.
For $p>2$, let $s_p=p/(p-1)$ and
\[
 Q_p(Z):=\frac12\|\mathcal AZ\|_{S_{s_p}}^2,
 \qquad
 q_p:=2Q_p\big|_{\{\|u\|_F=1\}}.
\]
For all sufficiently large $p$, there is a unique
critical point $u_p$ of $q_p$ near $u$, with
$u_p\to u$, which is again a nondegenerate strict
local maximum.

Writing $\lambda_p=q_p(u_p)$ and
$\beta_{\min,p}$ for the smallest transverse
eigenvalue of $\nabla^2Q_p(u_p)$, one has
\[
 \lambda_p\longrightarrow\lambda,
 \qquad
 \beta_{\min,p}\longrightarrow0,
\]
and
\[
 \frac{2}{\lambda_p+\beta_{\min,p}}
 \longrightarrow\frac{2}{\lambda},
 \qquad
 \frac{2}{\lambda_p}
 \longrightarrow\frac{2}{\lambda}.
\]

Thus the separation between the selection and
radial thresholds of these nearby critical
directions tends to zero as $p\to\infty$.
\end{enumerate}
\end{proposition}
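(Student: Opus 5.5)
The plan is to compute the Hessian of the nuclear norm at a full-rank matrix explicitly via the SVD, read off a large kernel, and then feed this into the multiplier formula of \cref{thm:modal}.

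\textbf{Step 1: the derivatives at a full-rank point.} Write $N(G)=\|G\|_*$. For $G=U\Sigma V^\top$ of full rank $r$, $N$ is smooth near $G$, with $\nabla N(G)=UV^\top$ (thin SVD). Consider the directions $E=UKV^\top$ with $K$ an $r\times r$ antisymmetric matrix. Along these, $G+tE$ has singular values that are stationary to first order, so $\nabla N(G+tE)$ changes only by rotation. The cleaner argument is to show that $D^2N(G)[E,E]=0$ on this subspace. The rotation curve $t\mapsto U e^{tK'}\Sigma e^{-tK''}V^\top$ preserves $N$ exactly, and its acceleration term is handled by positive homogeneity. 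Concretely, the second-order formula for the nuclear norm at full rank is
\[
D^2N(G)[E,E]=\sum_{i<j}\frac{(\tilde E_{ij}-\tilde E_{ji})^2}{\sigma_i+\sigma_j}\;+\;(\text{terms from the extra columns if }m\neq n),\qquad \tilde E=U^\top E V .
\]
This vanishes exactly when $\tilde E$ is symmetric on the square block and the off-block part is zero. That gives a kernel of dimension $r(r+1)/2$, which contains $\tilde E=\Sigma$ (the radial direction, since $D^2N(G)[G,\cdot]=0$ by $1$-homogeneity) and, in particular, $\tilde E=\mathrm{diag}(e_i)$.

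Next, $\nabla^2(\tfrac12N^2)=\nabla N\otimes\nabla N+N\,\nabla^2N$. On $\ker\nabla^2N(G)\cap\{\langle UV^\top,E\rangle=0\}$ the Hessian of $\tfrac12 N^2$ therefore vanishes. Taking $E=U\,\mathrm{diag}(c)V^\top$ with $\sum c_i=0$ gives an $(r-1)$-dimensional kernel.

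\textbf{Step 2: pulling back to $u^\perp$.} We have $\nabla^2Q_\infty(u)=\mathcal A\,\nabla^2(\tfrac12N^2)(G)\,\mathcal A$, which is PSD and has kernel $\mathcal A^{-1}\mathcal K$, where $\mathcal K$ is the kernel found in Step 1. I need this kernel to sit inside $u^\perp$. For $h\in\ker\nabla^2Q_\infty(u)$, Euler's identity gives $\nabla^2Q_\infty(u)u=\nabla Q_\infty(u)=\lambda u$, and symmetry then gives $\lambda\langle u,h\rangle=\langle\nabla^2Q_\infty(u)u,h\rangle=\langle u,\nabla^2Q_\infty(u)h\rangle=0$. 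Hence the kernel lies in $u^\perp$, $\nabla^2Q_\infty(u)$ preserves $u^\perp$, and $B$ is PSD with kernel dimension at least $r-1$. This proves (i).

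\textbf{Step 3: the multipliers.} For (ii), I would repeat the linearization in the proof of \cref{thm:modal}(i), which uses only that $Q_\infty$ is $C^2$ near $u$ and $2$-homogeneous. This yields $\kappa=(1-\eta\beta)/(1-\eta\lambda)$, and setting $\beta=0$ gives $1/(1-\eta\lambda)$. Its modulus exceeds $1$ precisely when $|1-\eta\lambda|<1$, i.e. for $0<\eta<2/\lambda$, which is exactly the regime in which the line contracts.

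For (iii), nondegeneracy gives $0\le\beta_i<\lambda$ by \cref{thm:modal}(ii). The condition $|1-\eta\beta_i|<|1-\eta\lambda|$ is equivalent to $\eta>2/(\lambda+\beta_i)$. At $\beta=0$ this threshold is $2/\lambda$, and $\beta_{\min}=0$ makes it the binding one, so the point is attracting for $\eta>2/\lambda$. At $\eta=2/\lambda$ the multiplier equals $-1$, so the contraction is not exponential.

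\textbf{Step 4: the finite-$p$ approximation.} For (iv), $Q_p\to Q_\infty$ in $C^2$ on a neighbourhood of $u$, because Schatten $s$-norms are smooth at full rank and converge locally in $C^2$ as $s\to1^+$ (via the singular-value formulas). The implicit-function argument from \cref{prop:robust} then gives the critical points $u_p\to u$, nondegeneracy is preserved, and $\lambda_p\to\lambda$. Eigenvalues of the transverse Hessian depend continuously on the data, so $\beta_{\min,p}\to0$, and the two threshold formulas converge.

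\textbf{Main obstacle.} The hardest step is establishing local $C^2$ convergence of $\|\cdot\|_{S_s}^2$ to $\|\cdot\|_*^2$ near full-rank matrices, together with the exact second-derivative formula for the nuclear norm. I would handle both using the Lewis–Sendov formula for spectral functions evaluated at distinct positive singular values. If singular values are repeated, I would instead use a symmetric-function perturbation argument; the kernel claim only requires the diagonal directions, which are handled directly by $N(U\,\mathrm{diag}(\sigma+tc)V^\top)=\sum(\sigma_i+tc_i)$, which is affine in $t$.
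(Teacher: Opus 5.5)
Your proposal is correct and follows the paper's proof: (i) rests on the directions $\mathcal A^{-1}\bigl(U\operatorname{diag}(c)V^\top\bigr)$ with $\sum_i c_i=0$, along which the nuclear norm is constant, combined with positive semidefiniteness, Euler's identity and symmetry to put this $(r-1)$-dimensional kernel inside $u^\perp$; (ii)--(iii) are the multiplier formula with $\beta=0$; and (iv) is local $C^2$ convergence plus the implicit-function theorem and eigenvalue continuity. Two small remarks: your opening claim in Step 1 about antisymmetric $K$ is backwards, since your own displayed formula makes the Hessian of $N$ strictly positive on those directions and zero on symmetric $\tilde E$, though nothing downstream uses it; and the paper removes your ``main obstacle'' without Lewis--Sendov by writing $Q_s(Z)=\tfrac12\bigl(\operatorname{tr}M(Z)^{s/2}\bigr)^{2/s}$ with $M(Z)=(\mathcal AZ)^\top(\mathcal AZ)\succ0$ (or $(\mathcal AZ)(\mathcal AZ)^\top$ when $m<n$) and $M^{s/2}=\exp(\tfrac s2\log M)$, which is jointly smooth in $(s,Z)$ even at repeated singular values and so lets the implicit-function theorem be applied directly in the parameter $s$.
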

 
\begin{proof}[Proof of Proposition~\ref{prop:spectral_selection_collapse}]
Write $C=\nabla^2Q_\infty(u)$. Since
$G=\mathcal Au$ has full rank, $Q_\infty$ is $C^2$ near
$u$. Indeed, when $m\ge n$,
\[
  \|Y\|_*=\operatorname{tr}\bigl((Y^\top Y)^{1/2}\bigr)
\]
for full-column-rank $Y$, and the analogous formula
with $YY^\top$ holds when $m<n$. Convexity gives
$C\succeq0$.

\emph{(i).} Take a thin singular value decomposition
\[
  G=U\operatorname{diag}(\sigma_1,\ldots,\sigma_r)V^\top,
  \qquad \sigma_i>0,
\]
and, for $d\in\mathbb R^r$ with $\sum_i d_i=0$, set
\[
  E_d=\mathcal A^{-1}
       \bigl(U\operatorname{diag}(d_1,\ldots,d_r)V^\top\bigr).
\]
For sufficiently small $|t|$, all $\sigma_i+td_i$
are positive. Thus
\[
  \|\mathcal A(u+tE_d)\|_*
   =\sum_{i=1}^r(\sigma_i+td_i)
   =\sum_{i=1}^r\sigma_i,
\]
so $Q_\infty(u+tE_d)$ is constant near $t=0$.
Differentiating twice,
\[
  \langle E_d,CE_d\rangle_F=0.
\]
Since $C\succeq0$, this forces $CE_d=0$.
The map $d\mapsto E_d$ is injective, so these
vectors span an $(r-1)$-dimensional subspace of
$\ker C$.

Criticality on the Frobenius unit sphere and Euler's
identity for the two-homogeneous $Q_\infty$ imply
\[
  \nabla Q_\infty(u)=\lambda u,
  \qquad Cu=\nabla Q_\infty(u)=\lambda u.
\]
By symmetry of $C$,
\[
  \lambda\langle u,E_d\rangle_F
   =\langle Cu,E_d\rangle_F
   =\langle u,CE_d\rangle_F=0.
\]
As $\lambda>0$, each $E_d$ lies in $u^\perp$.
Moreover $C$ preserves $u^\perp$, since
$Cu=\lambda u$. Thus the restriction
$B=C|_{u^\perp}$ is positive semidefinite and
has a kernel of dimension at least $r-1$,
proving (i).

\emph{(ii).} Let
$F_\eta(Z)=Z-\eta\nabla Q_\infty(Z)$. At $u$,
\[
  F_\eta(u)=(1-\eta\lambda)u,
  \qquad DF_\eta(u)=I-\eta C.
\]
If $\eta\ne1/\lambda$, continuity makes $F_\eta$
nonzero in a neighbourhood of $u$, so it induces
a $C^1$ map on projective space near $[u]$.
In local projective coordinates its derivative is
\[
  DP_\eta([u])
   =\left.\frac{I-\eta C}{1-\eta\lambda}
     \right|_{u^\perp}.
\]
Every $E_d$ constructed in (i) is consequently an
eigenvector with multiplier $(1-\eta\lambda)^{-1}$.
For
\[
  0<\eta<2/\lambda,\qquad \eta\ne1/\lambda,
\]
this multiplier has modulus strictly greater than
one, so $[u]$ is linearly unstable. On its
invariant line the multiplier is $1-\eta\lambda$,
whose modulus is less than one. At
$\eta=1/\lambda$ the line is mapped to zero in
one step, and the projective map is undefined
at $[u]$. This proves (ii).

\emph{(iii).} At a critical point, the spherical
Hessian is
\[
  \operatorname{Hess}_{S_F}q_\infty(u)
    =2(B-\lambda I),
  \qquad S_F=\{v:\|v\|_F=1\}.
\]
If $u$ is a nondegenerate strict local maximum,
this Hessian is negative definite. Thus every
eigenvalue $\beta_i$ of $B$ satisfies
$0\le\beta_i<\lambda$. For $\eta>2/\lambda$,
\[
  (1-\eta\lambda)^2-(1-\eta\beta_i)^2
   =\eta(\lambda-\beta_i)
      \bigl(\eta(\lambda+\beta_i)-2\bigr)>0.
\]
All the projective multipliers
\[
  \frac{1-\eta\beta_i}{1-\eta\lambda}
\]
therefore have modulus less than one. The $C^1$
projective map is locally contracting in suitable
coordinates, so $[u]$ is locally exponentially
attracting.

For $0<\eta<2/\lambda$, part (ii) gives an
eigenvalue of modulus greater than one; at
$\eta=2/\lambda$, it equals $-1$, which rules
out local exponential attraction. The projective
map is undefined at $[u]$ for $\eta=1/\lambda$.
Hence attraction starts precisely above
$2/\lambda$, which is also the radial stability
threshold.

\emph{(iv).} Put $s=p/(p-1)$ and write
\[
  Q_s(Z)=\frac12\|\mathcal AZ\|_{S_s}^2.
\]
On a neighbourhood $W$ of $u$, every
$\mathcal AZ$ has full rank. If $m\ge n$, set
$M(Z)=(\mathcal AZ)^\top(\mathcal AZ)\succ0$;
if $m<n$, set
$M(Z)=(\mathcal AZ)(\mathcal AZ)^\top\succ0$.
In either case,
\[
  Q_s(Z)=\frac12
   \bigl(\operatorname{tr}M(Z)^{s/2}\bigr)^{2/s}.
\]
The map $(s,Z)\mapsto Q_s(Z)$ is smooth on a
neighbourhood of $(1,u)$: on positive-definite
matrices,
\[
  M^{s/2}=\exp((s/2)\log M)
\]
depends smoothly on $(s,M)$. Consequently
$Q_s\to Q_\infty$ in $C^2$ on compact subsets
of $W$ as $s\to1$.

Choose a smooth chart $\chi$ of the Frobenius
unit sphere with $\chi(0)=u$. The function
$f(s,y)=2Q_s(\chi(y))$ is smooth near $(1,0)$,
and
\[
  \nabla_y f(1,0)=0,
  \qquad D_y^2f(1,0)
   \quad\text{is negative definite}.
\]
The implicit-function theorem applied to
$\nabla_y f(s,y)=0$ yields, for all $s>1$
sufficiently close to $1$, a unique solution
$y_s$ near zero with $y_s\to0$. The corresponding
$u_s=\chi(y_s)$ is a nondegenerate strict local
maximum of $q_s=2Q_s|_{S_F}$, and $u_s\to u$.

The $C^2$ convergence implies
\[
  \lambda_s=q_s(u_s)\longrightarrow
  q_\infty(u)=\lambda.
\]
It also implies convergence of the transverse
Hessians after identifying the tangent spaces
$u_s^\perp$ with $u^\perp$ through a continuous
orthonormal frame. Eigenvalue continuity
therefore gives
\[
  \beta_{\min,s}\longrightarrow
  \beta_{\min}=0.
\]
For $s=s_p=p/(p-1)$, the selection and radial
thresholds are
\[
  \frac{2}{\lambda_p+\beta_{\min,p}}
  \qquad\text{and}\qquad
  \frac{2}{\lambda_p},
\]
respectively; these formulas apply since $Q_p$
is convex and $u_p$ is a nondegenerate strict
local maximum. Both converge to $2/\lambda$.
In particular,
\[
  \frac{2}{\lambda_p}
  -\frac{2}{\lambda_p+\beta_{\min,p}}
  =
  \frac{2\beta_{\min,p}}
       {\lambda_p(\lambda_p+\beta_{\min,p})}
  \longrightarrow0.
\]
This proves (iv) and the proposition.
\end{proof}

\end{document}